\newtheorem{theorem}{Theorem}[section]
\newtheorem{lemma}[theorem]{Lemma}
\newtheorem{defn}[theorem]{Definition}
\newcommand{\lxy}{{\mathcal{M}}_d}
\newcommand{\mcx}{\C^d}
\newcommand{\mcy}{\C^d}
\newcommand{\R}{\mathbb{R}}
\newcommand{\C}{\mathbb{C}}
\newcommand{\X}{\mathcal{X}}
\renewcommand{\part}[2]{\frac{\partial #1}{\partial #2}}
\newcommand{\tr}{\mathrm{Tr}}
\newcommand{\vecc}{\mathrm{vec}}
\newcommand{\inner}[2]{\langle #1, #2 \rangle}
\newcommand{\Span}{\mathrm{span}}
\newcommand{\ext}{\mathrm{ext}}
\newcommand{\la}{\langle}
\newcommand{\ra}{\rangle}
\def\be{\begin{equation}}
\def\ee{\end{equation}}
\def\bi{\begin{itemize}}
\def\ei{\end{itemize}}
\newcommand{\tpsi}{\phi}
\newcommand{\calS}{\mathcal{S}}
\newcommand{\gram}{\mathrm{Gram}}
\newcommand{\rank}{\mathrm{rank}}
\newcommand{\CPSD}{\mathcal{CS}_+}
\newcommand{\CP}{\mathcal{CP}}
\newcommand{\calh}{\mathcal{H}}
\newcommand{\calE}{\mathcal{E}}
\newcommand{\sfT}{{\sf T}}
\newcommand{\CPSDR}{\mathrm{cpsd}\textnormal{-rank}}
\newcommand{\CPR}{\mathrm{cp}\textnormal{-rank}}
\newcommand{\cornm}{\pi(\calE_{n+m})}
\begin{document}
\title{Correlation matrices, Clifford algebras, and completely positive semidefinite rank}
%Matrix factorizations of  correlation matrices and applications}
%\author{Anupam Prakash \and Antonios Varvitsiotis}
 
%       
       \author{Anupam Prakash\thanks{Email: aprakash@ntu.edu.sg}  \;\;and Antonios Varvitsiotis\thanks{Email: avarvits@gmail.com}}
 \affil{Nanyang Technological University, Singapore and Centre for Quantum Technologies, Singapore}
   
            %  \author{Antonios Varvitsiotis}
                %    \affil[2]{Nanyang Technological University, Singapore and Centre for Quantum Technologies, Singapore}

\maketitle

\begin{abstract} 
A symmetric $n\times n$ matrix $X$ is  completely positive
semidefinite (cpsd) if there  exist $d\times d$  positive
semidefinite {matrices} $\{P_i\}_{i=1}^n$ (for some $d\in \mathbb{N}$) such
that  $X_{ij}= \tr(P_iP_j),$ for all {$i,j \in \{ 1, \ldots, n \}$}.
The $\CPSDR$ of a cpsd matrix is the smallest~${d\in\mathbb{N}}$ for which
such a representation is possible. It was shown independently in \cite{CPSD} and \cite{cpsd2}  that there exist completely positive semidefinite  matrices with sub-exponential cpsd-rank. Both proofs were obtained using   fundamental  results  from the quantum information literature   as a  black-box.   In this work we give a   self-contained and succinct  proof of the existence of  completely positive semidefinite  matrices with sub-exponential cpsd-rank. For this, we introduce   matrix valued Gram decompositions for correlation matrices and show  that for extremal correlations, the matrices in such a   factorization  generate  a Clifford algebra.
Lastly, we show that this  fact underlies and generalizes  Tsirelson's results  concerning  the structure
of  quantum representations  for  extremal quantum correlation matrices.

\end{abstract}

\section{Introduction}

A symmetric  $n\times n$ matrix $X$ is  {\em completely positive
semidefinite} (cpsd) if there  exist $d\times d$ Hermitian positive
semidefinite {matrices} $\{P_i\}_{i=1}^n$ (for some integer  $d\ge 1$) satisfying   
\be\label{cpsd}
X_{ij}= \tr(P_iP_j), \text{ for all } i,j \in [n].
\ee
%The {\em completely positive semidefinite rank} {($\CPSDR$)} of a cpsd matrix $X$, denoted by  $\CPSDR(X)$,  is defined as the {least} $d\ge 1$ for which such a factorization is possible. 
The set of $n\times n$ cpsd matrices forms  a convex  cone denoted by $\CPSD^n$. 
The cpsd  cone was   introduced recently  to provide linear conic formulations for the quantum analogues of various classical   graph parameters~\cite{LP14,R14b}.  Subsuming these results, it was  shown  in \cite{SV} that the set of joint probability  distributions   that can be generated using quantum resources  can be expressed as the projection of an affine section  of the $\CPSD^n$ cone. % (cf. Theorem \ref{thm:conicformulation}).

The {\em completely positive semidefinite rank}  of  $X\in \CPSD^n$, denoted by  $\CPSDR(X)$,  is defined as the {least} $d\ge 1$ for which there exist $d\times d$ Hermitian positive semidefinite  matrices  $P_1,\ldots,P_n$  satisfying   $X_{ij}=\tr(P_iP_j), $\  for all $i,j=1,\ldots,n$. The study of the $\CPSDR$   also has strong physical motivation  as it captures   the  size of a quantum system that is necessary to generate 
 a quantum probability distribution~\cite{CPSD}.

Besides their physical motivation, the cpsd cone and the cpsd-rank are also  interesting from  the perspective of linear conic optimization. Firstly, the cpsd cone and the cpsd-rank are  non-commutative analogues  of the completely positive cone and the $\CPR$. A 
symmetric $n\times  n $ matrix $X$ is  {\em completely
positive} (cp)  if there exist  vectors $\{p_i\}_{i=1}^n\subseteq \R^d_+$ (for some $d\ge 1$) such that $X_{ij}=
p_i^\sfT p_j,$ for all $ i,j=1,\ldots,n$. The {\em cp-rank} of a cp matrix $X$ is the least integer $d\ge 1$ for which such a factorization is possible.   The cp cone  has been extensively studied as any  quadratic program with a mix of     binary and continuous variables can be formulated  as a linear  conic  program over the cp cone~\cite{Bur07}.

 Secondly,  cpsd factorizations correspond to symmetric psd-factorizations. Recall that a {\em $d$-dimensional  psd factorization} of  a  matrix $X\in \R^{n\times m}_+$   consists of  two families  of  $d\times d$ Hermitian positive
semidefinite {matrices} $\{P_i\}_{i=1}^n, \{Q_j\}_{j=1}^m$ %(for some integer  $d\ge 1$) 
satisfying   
$X_{ij}= \tr(P_iQ_j)$  { for all } $i=1,\ldots,n$ and $  j=1,\ldots,m.$ The {\em psd-rank} of an entrywise nonnegative matrix $X$ is the smallest dimension  of a psd factorization.  The psd-rank has been extensively studied as it characterizes the semidefinite extension complexity of polytopes  and as a result, captures the efficacy of semidefinite programming for   hard combinatorial problems~\cite{FGPRT}.

 Combining  Carath\'eodory's
 Theorem  (e.g. see  \cite[Theorem 1.34]{CP}) with  the atomic reformulation of the $\CPR$ we   have  that   $\CPR(X)\le \binom{n+1}{2}$ for  any   $X\in\CP^n$. The   best upper bound currently is     ${n^2\over 2}+O(n^{3/2})$
 \cite{BSU}, which   is asymptotically  tight with respect to the  Drew-Johnson-Loewy  lower bound of $\big\lfloor {n^2/ 4}\big\rfloor,$ for $n\ge 4$ \cite{DJL}. 
 Furthermore, the psd-rank of a  matrix  $X\in \R^{n\times m}_{+}$  is always at most $\min\{n,m\}$ as there is a factorization $X_{ij} = \tr(P_i Q_j)$ 
 for diagonal matrices $P_{i}=\text{diag}(e_{i})$ and $Q=\text{diag}(X^{j})$ where $e_{i}, i \in [n]$ are the standard basis vectors for $\R^{n}$ and 
 $X^{j}$ is the $j$-th column  of~$X$. 
 
  %$ for which it admits a 
%different     is a natural non-commutative analogue of the  well-studied notion of the 
%completely positive rank~\cite{CP}.

In contrast to these related notions of matrix ranks, it was  shown independently in \cite{CPSD} and  \cite{cpsd2},  that there exist  cpsd matrices  whose  cpsd-rank is sub-exponential in terms of their dimension. Specifically, we have that:

%\noindent {\bf Completely positive semidefinite rank. } 

\begin{theorem}[\cite{CPSD,cpsd2}]\label{thm:cpsdlowerbound}
For  any $n\in \mathbb{N}$ there  exists a matrix   $X_n\in\CPSD^{2n}$ such~that
\be\label{cdwferfer}
 \CPSDR(X_n)\ge  {{2}^{\lfloor r_{\max}(n) / 2 \rfloor}},
 \ee
 where   $r_{\max}(n)$  is   the {greatest} integer
satisfying $\binom{r+1}{2}\le~n$, {\em i.e.},
$$r_{\max}(n)=\left\lfloor {\sqrt{1+8n}-1 \over 2}\right\rfloor.$$

 \end{theorem}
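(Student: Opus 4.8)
The plan is to exhibit $X_n$ explicitly, starting from an extreme point of the elliptope. Write $\calE_n=\{C\in\sym^n:\ C\succeq0,\ C_{ii}=1\ \text{for all }i\}$ for the set of $n\times n$ correlation matrices, and recall the standard description of its extreme points: a rank-$r$ matrix $C=V^{\sfT}V\in\calE_n$, with $V=[v_1\,|\,\cdots\,|\,v_n]$ and $v_i\in\R^r$, is extreme in $\calE_n$ if and only if the rank-one matrices $v_iv_i^{\sfT}$ span the $\binom{r+1}{2}$-dimensional space $\sym^r$. For $r:=r_{\max}(n)$ we have $\binom{r+1}{2}\le n$, so taking the $v_i$ to be the standard basis vectors $e_k$ together with the normalized sums $(e_k+e_\ell)/\sqrt2$ and padding arbitrarily to total length $n$ produces an extreme point $C$ of rank exactly $r$. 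From it I define the $2n\times 2n$ symmetric matrix $X_n$, with rows and columns indexed by $[n]\times\{+1,-1\}$, by $X_n[(i,a),(j,b)]:=\tfrac14(1+ab\,C_{ij})$.

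Next I would verify $X_n\in\CPSD^{2n}$ with $\CPSDR(X_n)\le 2^{\lfloor r/2\rfloor}$, using the Clifford construction underlying Tsirelson's theorem. Fix Hermitian matrices $\gamma_1,\dots,\gamma_r$ of size $D:=2^{\lfloor r/2\rfloor}$ satisfying the Clifford relations $\gamma_k^2=I$, $\gamma_k\gamma_\ell+\gamma_\ell\gamma_k=0$ for $k\ne\ell$ (these exist, being the generators in an irreducible representation of the Clifford algebra on $r$ generators), and set $A_i:=\sum_k (v_i)_k\gamma_k$; then $A_i$ is Hermitian with $A_i^2=\|v_i\|^2 I=I$, and for the projections $E_i^a:=\tfrac12(I+aA_i)$ a short computation gives $\tfrac1D\tr(E_i^aE_j^b)=\tfrac14(1+ab\,C_{ij})$. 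Hence $X_n[(i,a),(j,b)]=\tr\bigl(D^{-1/2}E_i^a\cdot D^{-1/2}E_j^b\bigr)$ is a psd factorization of size $D$. (For $r\le1$ the bound is trivial.)

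The core of the argument is the matching lower bound $\CPSDR(X_n)\ge 2^{\lfloor r/2\rfloor}$. Given any psd factorization $X_n[(i,a),(j,b)]=\tr(R_{i,a}R_{j,b})$ in size $d$, summing over $a,b$ shows $\tr(Q_iQ_j)=1$ for $Q_i:=R_{i,+}+R_{i,-}$, and Cauchy--Schwarz in the Frobenius inner product forces all $Q_i$ to coincide with one $Q\succeq0$ with $\tr(Q^2)=1$; since $0\preceq R_{i,a}\preceq Q$, everything is supported on $W:=\mathrm{range}(Q)$, on which $Q$ is invertible. Put $A_i:=R_{i,+}-R_{i,-}$: one checks $\tr(A_iA_j)=C_{ij}$, so the $A_i$ form a \emph{matrix-valued Gram decomposition} of $C$ and span a space of dimension $\rank C=r$, and one can choose Hermitian $\Gamma_1,\dots,\Gamma_r$ on $W$ with $\tr(\Gamma_k\Gamma_\ell)=\delta_{k\ell}$ and $A_i=\sum_k(v_i)_k\Gamma_k$. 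The two-outcome structure now enters: $Q\pm A_i=2R_{i,\mp}\succeq0$, and their product has trace $\tr(Q^2)-\tr(A_i^2)=0$, so it vanishes; this forces $A_i^2=Q^2$ and $QA_i=A_iQ$ for every $i$, hence also $Q\Gamma_k=\Gamma_kQ$. Expanding $A_i^2=Q^2$ in the $\Gamma_k$ reads $\sum_{k,\ell}(v_i)_k(v_i)_\ell\,\tfrac12(\Gamma_k\Gamma_\ell+\Gamma_\ell\Gamma_k)=Q^2$ for all $i$; since $\{v_iv_i^{\sfT}\}$ spans $\sym^r$ by extremality and each has trace $1$, comparing with the symmetric matrices $E_{kk}$ and $E_{k\ell}+E_{\ell k}$ yields $\Gamma_k^2=Q^2$ and $\Gamma_k\Gamma_\ell+\Gamma_\ell\Gamma_k=0$ for $k\ne\ell$. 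Finally $\Delta_k:=Q^{-1}\Gamma_k$ on $W$ satisfy the Clifford relations $\Delta_k^2=I_W$, $\Delta_k\Delta_\ell+\Delta_\ell\Delta_k=0$, hence define a nonzero representation of the Clifford algebra on $r$ generators; since that algebra is semisimple with every irreducible representation of dimension $2^{\lfloor r/2\rfloor}$, we conclude $d\ge\dim W\ge 2^{\lfloor r/2\rfloor}$.

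I expect the structural step to be the main obstacle: showing that in \emph{every} matrix-valued Gram decomposition of an extremal correlation matrix the factors generate (after rescaling by $Q$) a Clifford algebra. It is precisely here that the extremality of $C$ --- through $\Span\{v_iv_i^{\sfT}\}=\sym^r$ --- and the $\{+1,-1\}$-outcome structure --- through $A_i^2=Q^2$ --- are used together, and this is the fact that generalizes Tsirelson's description of optimal quantum strategies. The other ingredients (the extreme-point description of the elliptope; the reduction from a cpsd factorization of $X_n$ to observables with a common weight; the minimal dimension of Clifford representations) are standard or elementary. Combining the two bounds gives $X_n\in\CPSD^{2n}$ with $\CPSDR(X_n)=2^{\lfloor r_{\max}(n)/2\rfloor}$, proving the theorem.
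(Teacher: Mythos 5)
Your proof is correct, and while it constructs the same matrix $X_n = P_C$, uses the same Clifford construction for the upper bound, and ultimately relies on the same extremality characterization $\Span\{v_iv_i^\sfT\}=\sym^r$ to derive the anticommutation relations, the lower-bound argument is organized rather differently from the paper's. The paper normalizes the factorization by conjugating with $K^{-1/2}$, passes to the doubled $2n\times 2n$ matrix $E_C=\left(\begin{smallmatrix}C&C\\C&C\end{smallmatrix}\right)$, shows it is the unique elliptope completion of $C$ and an extreme point of $\calE_{2n}$ equal to $\gram(KX_1,\dots,KX_n,X_1K,\dots,X_nK)$, and then invokes the general Theorem~\ref{thm:bipartite}; you apply the operator-quadratic-form spanning argument directly to the $n\times n$ matrix $C$ with no doubling. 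More substantively, the paper obtains $X_i^2=I$ by appealing to the unit-norm property of $C$-systems for extremal bipartite correlations (Lemma~\ref{Tsirelson:equality}), which implicitly treats $C\in\ext(\calE_n)$ as an extreme point of $\pi(\calE_{2n})$; your replacement --- $Q\pm A_i\succeq 0$ together with $\tr\bigl((Q+A_i)(Q-A_i)\bigr)=\tr(Q^2)-\tr(A_i^2)=0$ forces $(Q+A_i)(Q-A_i)=0$, hence $A_i^2=Q^2$ and $QA_i=A_iQ$ --- is a self-contained positivity-and-trace argument that needs no extremality and moreover yields the commutation of $Q$ with the $\Gamma_k$ for free, which the paper instead arranges through the $K^{-1/2}$ conjugation. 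Your route is therefore more direct for Theorem~\ref{thm:cpsdlowerbound}; what the paper's detour buys is the stand-alone Theorem~\ref{thm:bipartite} about arbitrary matrix factorizations of extremal elliptope matrices, which is what also yields Tsirelson's Theorem~\ref{thm:lowerboundlocaldimension} on the local dimension of quantum representations.
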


  %\paragraph{{\bf Applications  to the cpsd-rank.} }
 Both   proofs of Theorem~\ref{thm:cpsdlowerbound} \cite{CPSD,cpsd2} were obtained using   fundamental  results  from the quantum information literature   as a  black-box~\cite{TS87}. Our main goal in this 
article is to give a self-contained proof of Theorem  \ref{thm:cpsdlowerbound}
 %which we replace by Theorem~\ref{thm:bipartite}. The value of this alternative  proof is 
 that  bypasses the  quantum information results that were used in  the original proofs \cite{CPSD, cpsd2}, and as such, makes it   accessible to the broader mathematical  community.  Furthermore, the new proof presented in this article  highlights    certain  matrix factorizations of  the elliptope as the main underlying   mathematical tool and paves the way for further generalizations.   %in the study of the cpsd-rank.
% as the  underlying mathematical object. 

%The study of the cpsd cone and the $\CPSDR$ is  motivated by physical  considerations
%Specifically, the set of $n\times n$ cpsd matrices is a convex  cone that was recently introduced  to provide linear conic formulations for the  quantum  analogues of various classical graph  parameters~\cite{LP14,R14b}.  Subsequently, it was shown  in \cite{SV} that the set  of quantum behaviors   can be expressed as the projection of an affine section  of the cpsd~cone. 

%Our m in this paper We use Theorem~\ref{thm:bipartite}  to give a succinct and self-contained  proof of the existence of cpsd matrices  whose  cpsd-rank is sub-exponential in terms of their size.  Specifically, we recover  the following  result that was  recently shown  in \cite{CPSD}, and independently in \cite{cpsd2}.

 \paragraph{{\bf Main technical result.}}
The {\em correlation matrix}  of  the  random variables $X_1, \ldots, X_n$ is the $n\times n$ matrix whose $(i,j)$ entry is equal to the correlation between $X_i$ and $X_j$, {\em  i.e.},
$$\mathbb{E}[(X_{i}  - \mu_{i}) (X_{j} - \mu_{j})]/\sigma_{i} \sigma_{j},$$ where $\mu_{i}, \sigma_{i}$ denote  the mean and standard deviation of $X_{i}$. Correlation 
matrices capture the association between random variables and their  use is  ubiquitous in statistics. 

It is easy to verify that correlation matrices are positive semidefinite and have all diagonal entries equal to one. Conversely, any such matrix can be expressed as a correlation matrix for some family of random variables.  Thus, the set of $n\times n$ correlation matrices coincides with  the $n$-dimensional {\em elliptope}, denoted by $\calE_n$,   defined as the set of $n\times n$ symmetric positive semidefinite  matrices with  diagonal entries   equal to one, {\em i.e.},  $$\calE_n:=\{E\succeq 0: E_{ii}=1\ (i\in [n])\}.$$
The elliptope is a spectrahedral set  whose structure has  been  extensively  studied ({\em e.g.} see~\cite{DL} and references therein). Its significance  is illustrated by the fact  that it corresponds to  the feasible region of various semidefinite programs that are used to approximate  NP-hard combinatorial optimization problems ({\em e.g.} MAX-CUT \cite{GW}).

%$\pi(\calE_{n+m})$ \cite{TS87}. This is explained in  more detail  in Section~\ref{}. Consequently, optimizing the 

  In  this work we introduce and study matrix factorizations of a specific form for correlation matrices. 
  %As explained below, such factorizations  can be used to study   a newly introduced notion of matrix rank relevant to  quantum information theory.  
  Informally, our main  result  is that for {\em extreme points}  of the set of  correlation matrices, the matrices in such a factorization  generate a Clifford~algebra.  
  
  Recall that  the {\em rank-$n$ Clifford algebra}, denoted by $\mathcal{C}_n$, is the universal $C^\ast$-algebra generated by Hermitian indeterminates $z_1,\ldots, z_n$ satisfying the following relations:
\be 
z_iz_j+z_jz_i=2\delta_{ij}I, \quad \text{ for all }   i, j\in [n].
\ee
Furthermore, it is well-known that depending on the parity of $n$,   the algebra $\mathcal{C}_n$ has either one or two irreducible representations, each of dimension  $2^{\lfloor{ n/ 2}\rfloor}$, {\em e.g.},  see \cite[Chapter 6]{GW}.

 Having introduced Clifford algebras, we  now formally state our main technical result.
%correspond to representations of an appropriately defined Clifford algebra and as  a consequence, their size can be lower bounded. Specifically, we have the following~result:

\begin{theorem}\label{thm:bipartite}
%Let   $X=\begin{pmatrix}X_1& C\\C^\sfT& X_2\end{pmatrix}\in \ext\calE_{n+m}$ with $\rank(X_1)=\rank(X_2)=\rank(X):=~r.$ Moreover,  let   $X=\gram(\{a_x\}_x\{b_y\}_y)$  where $\Span(\{a_x\}_x)=\Span(\{b_y\}_y)=\R^r$. Then,
%for any   family of matrices  $\{A_x\}_x,\{B_y\}_y,K$ satisfying  
%\begin{itemize}
% \item[$(i)$]  $X={\rm Gram}(\{KA_x\}_x,\{B_yK\}_y)$;
%\item[$(ii)$]  $A_x^2=I, \forall x, \ B_y^2=I, \forall y$;
%\item[$(iii)$]   $  \tr(K^2)=1$ and $  K$ is positive definite.
%\end{itemize}
%Then we  have that 
%$$(\sum_{x \in [n]} \lambda_x A_{x})^{2} = \| \sum_{x\in [n]} \lambda_x a_{x}\|^{2} I,  \text{ and }
%(\sum_{y\in [m]} \mu_y B_{y})^{2} = \| \sum_{y \in [m]}  \mu_{y} b_{y}\|^{2} I.$$
Let   $E$ be an extreme point of $\calE_{k}$ with $\rank(E)=n$  and let $A$ be a full-rank principal  submatrix of $E$. %\note{and $m\ge \binom{n}{2}$}.  
Assume that     $E=\left(\begin{smallmatrix}A& C\\C^\sfT& B\end{smallmatrix}\right)$ and consider  $d\times d$   Hermitian matrices    $\{X_i\}_{i=1}^n, \{Y_j\}_{j=1}^{k-n},  K$~satisfying
%Let   $E=\left(\begin{smallmatrix}A& C\\C^\sfT& B\end{smallmatrix}\right)$ be an extreme point of $\calE_{n+m}$ where  $\rank(A)=\rank(E)=~n$. %\note{and $m\ge \binom{n}{2}$}.  
%Consider  $d\times d$   Hermitian matrices    $\{X_i\}_{i=1}^n, \{Y_j\}_{j=1}^m,  K$~satisfying
\begin{itemize}[itemsep=0.2em]
 \item[$(i)$]  $E={\rm Gram}(KX_1,\ldots,KX_{n}, Y_1K,\ldots,Y_{k-n}K)$;
\item[$(ii)$]    $X_i^2=Y_j^2=I_d,\  \text{ for all }  i\in [n], j\in [k-n]$;
\item[$(iii)$]   $  \tr(K^2)=1$ and $  K$ is  positive definite.
%\item[$(iv)$] $[A_x,B_y]=0$ for all $x,y$.
\end{itemize}
Then, the algebra $\mathbb{C}[ X_1,\ldots,X_n]$   is isomorphic to the rank-$n$ Clifford algebra $\mathcal{C}_{n}.$ In particular,  the size of the matrices  $X_1,\ldots, X_n$ is      lower bounded by~$2^{\lfloor n/2\rfloor}$.
%
%Furthermore, the same holds if condition $(i)$ is replaced with 
%\begin{itemize}
%\item[$(i')$] $E={\rm Gram}(KX_1,\ldots,KX_{n}, KY_1,\ldots,KY_m).$
%Then 
%$$\Big(\sum_{x \in [n]} \lambda_x A_{x}\Big)^{2} = \Big\| \sum_{x\in [n]} \lambda_x a_{x}\Big\|^{2} I.$$
%,  \text{ and }
%(\sum_{y\in [m]} \mu_y B_{y})^{2} = \| \sum_{y \in [m]}  \mu_{y} b_{y}\|^{2} I.$$
\end{theorem}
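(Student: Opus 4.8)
The plan is to establish the Clifford relations $X_aX_b+X_bX_a=2A_{ab}I_d$ for all $a,b\in[n]$; once this is done everything else is formal. First I would put $\rho:=K^2$, a faithful density matrix by $(iii)$, and expand $(i)$ in the Hilbert--Schmidt inner product: using that $A$ is symmetric this gives $A_{ij}=\tr(\rho X_iX_j)$, hence $\tr(\rho[X_i,X_j])=0$. Since $A$ is an $n\times n$ full-rank submatrix of the rank-$n$ matrix $E$, the matrices $KX_1,\dots,KX_n$ span the range of the decomposition; so $X_1,\dots,X_n$ are linearly independent, spanning an $n$-dimensional space $\mathcal X$ of Hermitian matrices, and $Y_jK\in\Span\{KX_i\}$, which together with Hermiticity of $Y_j$ forces $Y_j\in\mathcal X$ and $[Y_j,\rho]=0$. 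Granting the Clifford relations: $A$ is a real symmetric full-rank principal submatrix of a positive semidefinite matrix, hence positive definite; writing $A=LL^{\sfT}$ with $L$ real invertible, the Hermitian matrices $Z_i:=\sum_k(L^{-1})_{ik}X_k$ satisfy $Z_aZ_b+Z_bZ_a=2\delta_{ab}I_d$ and generate the same algebra, so $\C[X_1,\dots,X_n]$ is a $*$-quotient of $\mathcal C_n$. As the $X_i$ are nonzero this quotient is nonzero, so the representation on $\C^d$ restricts to a nonzero representation of $\mathcal C_n$, which — being a direct sum of irreducibles, each of dimension $2^{\lfloor n/2\rfloor}$ — has dimension at least $2^{\lfloor n/2\rfloor}$, giving $d\ge 2^{\lfloor n/2\rfloor}$. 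For $n$ even $\mathcal C_n$ is simple so the quotient is all of it; for $n$ odd one additionally has to rule out that $Z_1\cdots Z_n$ acts as a scalar (equivalently, that the quotient has half the dimension), which has to be checked separately and is not needed for the rank bound.

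For the Clifford relations themselves I would encode extremality of $E$ and exploit the multiplicative structure. Set $R_p:=X_p$ for $p\le n$, $R_{n+j}:=Y_j$, and $v_p:=\sqrt\rho\,R_p$; then $E=\mathrm{Gram}(v_1,\dots,v_k)$ in the Hilbert--Schmidt space, $v_pv_p^{\dagger}=\rho$ for all $p$, and $\mathcal V:=\Span_{\R}\{v_p\}$ is $n$-dimensional. By the standard description of extreme points of the elliptope, $E$ is an extreme point of $\calE_k$ iff the only real symmetric $H$ with zero diagonal and $\mathrm{range}(H)\subseteq\mathrm{range}(E)$ is $H=0$; equivalently, the rank-one self-adjoint operators $T_p : w\mapsto\langle v_p,w\rangle\,v_p$ on $\mathcal V$ span the space of all self-adjoint operators on $\mathcal V$.

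I would then feed admissible perturbations of $E$ into this. The ``gauge'' perturbations $\rho\mapsto e^{tG}\rho\,e^{-tG}$ ($G$ skew-Hermitian, observables fixed) produce directions $H_{pq}=\tr([G,\rho]R_pR_q)$, which automatically have zero diagonal (from $\tr[G,\rho]=0$, $R_p^2=I_d$) and range inside $\mathrm{range}(E)$ (since $\sum_pc_pR_p=0$ forces $Hc=0$), and are real symmetric exactly when $\tr(G[\rho,[R_p,R_q]])=0$ for all $p,q$; extremality annihilates every such $H$, and unwinding the resulting annihilator relations in the real inner-product space of Hermitian matrices shows that $X_aX_b+X_bX_a$ agrees, modulo the commutant of $\rho$, with a real linear combination of the commutators $R_pR_q-R_qR_p$. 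Then perturbations that genuinely move the generators inside the Hermitian involutions (with a compensating change of $\rho$, needed to keep $\mathrm{range}(E)$ from enlarging), fed in the same way and combined with the previous step, force $(\sum_ic_iX_i)^2$ to be a scalar operator for every $c\in\R^n$ — necessarily $(c^{\sfT}Ac)\,I_d$ after applying $\tr(\rho\,\cdot\,)$ — and polarizing in $c$ gives $X_aX_b+X_bX_a=2A_{ab}I_d$. The hard part will be exactly this last family of perturbations: the condition $\mathrm{range}(H)\subseteq\mathrm{range}(E)$ is stringent — one cannot, e.g., conjugate a single generator in isolation whenever that generator also occurs among the $R_p$ coming from the $Y_j$ — so one has to build coherent, range-preserving deformations and check that the constraints they impose collapse precisely to the Clifford relations. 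Equivalently, in the operator formulation one expands the compressions to $\mathcal V$ of right-multiplication by each generator in the spanning family $\{T_p\}$ and reads the relations off the coefficients; either way, disentangling the resulting linear system is the crux.
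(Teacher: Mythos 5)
Your overall plan is correct — establishing the Clifford relations $X_aX_b+X_bX_a=2A_{ab}I_d$ is the crux and extremality of $E$ is the lever — and your early observation is genuinely useful: Hermiticity of $Y_j$ together with $Y_jK=\sum_i\lambda_{ij}KX_i$ forces $[\rho,\sum_i\lambda_{ij}X_i]=0$, hence $Y_j=\sum_i\lambda_{ij}X_i$. (The paper instead passes to the conjugates $\tilde X_i:=KX_iK^{-1}$, which are not obviously Hermitian, so your version is cleaner.) However, there is a real gap and you flag it yourself: the derivation of the Clifford relations. Your program — gauge perturbations $\rho\mapsto e^{tG}\rho e^{-tG}$ plus ``coherent, range-preserving deformations'' of the generators fed into the zero-diagonal, range-constrained characterization of extremality — is left open, with ``disentangling the resulting linear system'' explicitly deferred. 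That is precisely the step that needs an idea, not bookkeeping; as written, the core of your proof is incomplete.

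The paper closes this with a short argument about operator-valued quadratic maps that you should compare against your perturbation scheme. The Li--Tam characterization (Theorem~\ref{thm:extremepoints1}) says that for $E=\Gram(u_1,\ldots,u_k)$ with the $u_i\in\R^n$ spanning, $E\in\ext(\calE_k)$ iff $\Span\{u_iu_i^\sfT\}=\mathcal{S}_n$; equivalently (Lemma~\ref{lem:qforms}), any operator-valued quadratic map $Q:\R^n\to\calh_d$ vanishing on all the $u_i$ is identically zero. Writing $a_i,b_j$ for the Gram vectors of the $A$ and $B$ blocks (so the $a_i$ form a basis and $b_j=\sum_i\lambda_{ij}a_i$, with $\Lambda$ witnessing $\rank A=\rank E$), one sets
\begin{equation*}
Q_1\Big(\sum_i\mu_ia_i\Big):=\Big(\sum_i\mu_iX_i\Big)^2,\qquad Q_2\Big(\sum_i\mu_ia_i\Big):=\Big\|\sum_i\mu_ia_i\Big\|^2 I_d,
\end{equation*}
checks $Q_1(a_i)=X_i^2=I_d=Q_2(a_i)$ and $Q_1(b_j)=(\sum_i\lambda_{ij}X_i)^2=Y_j^2=I_d=Q_2(b_j)$ — the last equality is exactly where your $Y_j=\sum_i\lambda_{ij}X_i$ slots in — and concludes $Q_1=Q_2$ by extremality, which is the Clifford identity $(\sum_i\mu_iX_i)^2=(\mu^\sfT A\mu)I_d$. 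This single argument replaces your entire perturbation program. Your formal deduction of the rank bound from the Clifford relations is fine, and your caveat about $n$ odd (whether the generated algebra is genuinely all of $\mathcal{C}_n$ or only one simple summand) applies equally to the paper's statement, but it does not affect the bound $d\ge 2^{\lfloor n/2\rfloor}$, which is what is used downstream.
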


%\begin{defn}\label{def:typeab}
% Consider a correlation matrix $X\in \calE_{n+m}$. We say that $X$ has a {\em type-$a$ factorization} if 
%there exist $d\times d$ Hermitian matrices  $\{A_x\}_x,\{B_y\}_y, K$  such that 
%%Let   $X=\begin{pmatrix}X_1& C\\C^\sfT& X_2\end{pmatrix}\in \ext\calE_{n+m}$ with $\rank(X_1)=\rank(X_2)=\rank(X):=~r.$ Moreover,  let   $X=\gram(\{a_x\}_x\{b_y\}_y)$  where $\Span(\{a_x\}_x)=\Span(\{b_y\}_y)=\R^r$. Then,
%%for any   family of matrices  $\{A_x\}_x,\{B_y\}_y,K$ satisfying  
%\begin{itemize}
% \item[$(i)$]  $X={\rm Gram}(\{KA_x\}_x,\{B_yK\}_y)$;
%\item[$(ii)$]    $A_x^2=B_y^2=I_d,\  \forall x,  y$;
%\item[$(iii)$]   $  \tr(K^2)=1$ and $  K$ is  positive definite.
%%\item[$(iv)$] $[A_x,B_y]=0$ for all $x,y$.
%\end{itemize}
%Similarly,  $X$ has a {\em type-$b$ factorization} if 
%there exist $d\times d$  Hermitian matrices $\{X_x\},\{Y_y\} $~where  
%\bi 
%\item[$(i)$] $X={\rm Gram}(\{X_x\}_x,\{Y_y\})$;
%\item[$(ii)$]  $  X_x^2=Y_y^2={I\over d}I, \forall x,y.$
%\ei  
%\end{defn}

   The proof of Theorem~\ref{thm:bipartite} is given in Section~\ref{thm:bipartite}. Throughout this work, we  refer to any family of matrices $\{X_i\}_{i=1}^n,\{Y_j\}_{j=1}^{k-n}, K$ satisfying conditions $(i), (ii)$ and~$(iii)$ above  as a {\em matrix factorization of the correlation matrix $E$}.

A couple of  comments are in order concerning  Theorem~\ref{thm:bipartite}. First,  it is well-known that there exists a  rank-$n$ extreme point of $\calE_{k}$  if and only if   $k\ge \binom{n}{2}$  (e.g. see Section~\ref{sec:basicproperties}),  and this condition will  be satisfied whenever we apply Theorem~\ref{thm:bipartite}.  Second, although not immediately obvious, we show  in Lemma~\ref{lem:representations} that  any correlation matrix admits  such a matrix factorization (where we can  even always take  $K$ to  be  a multiple of the identity). Third, it is worth noting that Theorem~\ref{thm:bipartite} remains valid  when  $(i)$ is replaced~with:
\bi 
\item[$(i')$] $E={\rm Gram}(KX_1,\ldots,KX_{n}, KY_1,\ldots,KY_{k-n}).$
\ei
and also % in the  case where $K=d^{-1/2}I_d$, {\em i.e.,} 
when $(i), (ii)$ and $(iii)$ are replaced with: 
\begin{itemize}[itemsep=0.2em]
 \item[$(i'')$]  $E={\rm Gram}(A_1,\ldots,A_{n}, B_1,\ldots,B_{k-n})$;
\item[$(ii'')$]    $A_i^2=B_j^2={1\over d} I_d,\  \text{ for all }  i\in [n], j\in [k-n]$.
%\item[$(iii)$]   $  \tr(K^2)=1$ and $  K$ is  positive definite.
%\item[$(iv)$] $[A_x,B_y]=0$ for all $x,y$.
\end{itemize}
%In both  of these  cases, using the same   arguments as in Theorem~\ref{thm:bipartite},   it follows   that the algebra   $\mathbb{C}[X_1,\ldots,X_n]$ is isomorphic to~$\mathcal{C}_n$. %Nevertheless, we are currently unaware of any applications of factorizations of this form.
% \subsection{Results}
%Next, we proceed to describe two applications of  matrix factorizations of correlation matrices, of the form given  in Theorem~\ref{thm:bipartite}. The first one  is relevant  to quantum information theory and the second one to the  completely positive semidefinite~rank.  

%Unlike the completely positive rank  which can  at most  quadratic in terms  of the matrix size (e.g. \cite{}), no general upper bound is  known on the cpsd-rank of a cpsd matrix.  In contrast, %Furthermore, it was shown in \cite{CPSD} and independently in \cite{cpsd2} that there exist cpsd matrices whose cpsd-rank is  exponential in terms of the size.  

%
%In fact, using  Tsirelson's  lower bound as a black box,  it was recently shown in \cite{CPSD} and independently in \cite{cpsd2}, that there  exist   cpsd matrices  whose   cpsd-rank is exponential in terms  of their  size.  As a second application of Theorem~\ref{thm:bipartite},  in Theorem~\ref{cpsd:lowerbound} we  give a succinct and self-contained  proof of  this~fact.
 
Using Theorem \ref{thm:bipartite}, in Section \ref{sec:cpsdrank} we give a short  proof of Theorem \ref{thm:cpsdlowerbound}. 

 \paragraph{{\bf Related work  from   quantum information theory.}} 
The image  of the  elliptope $\calE_{n+m}$  under the  projection~operator
\be\label{eq:projection}
\pi: \mathcal{S}_{n+m} \rightarrow  \R^{n\times m},  \quad  \begin{pmatrix}A& C\\C^\sfT& B\end{pmatrix} \mapsto C,
\ee
is known as the set  of {\em $(n,m)$ bipartite correlation matrices} 
and is of central importance to quantum information theory. The link   between quantum information theory and bipartite correlation matrices  is best explained within  the following  framework, known in the physics literature  as a {\em Bell scenario} \cite{AMO}. Consider two parties, Alice and Bob, that share a bipartite quantum system, {\em e.g.,} a pair of spin-1/2 particles. 
According to the postulates of quantum mechanics, the  {\em state} of a bipartite quantum  system with {\em local dimension $d$} is  described by a Hermitian psd matrix $\rho$ acting on~$\C^d\otimes \C^d$   with $\tr(\rho)=1$. 

Independently and simultaneously, Alice and Bob   measure  their   part of the system and then  announce the outcomes  of their measurements.  By the postulates of quantum mechanics,  the  process of measuring a $d$-dimensional quantum system  is described   by  a  Hermitian matrix $H \in \C^{d \times d}$,  called  an   {\em observable}.
 %We also  need a second (equivalent) mathematical  formalism  describing  a quantum  measurement. 
 %Given a $d$-dimensional quantum system, an  {\em observable} is
    By the spectral theorem, for all observables $H$  we have the decomposition $H=\sum_{i=1}^k\lambda_iP_i,$ where  $\{ \lambda_i\}_{i=1}^k$ ($k\le d)$ are the eigenvalues  of $H$  and $\{P_i\}_{i=1}^k$ {are} the projectors {onto} the corresponding eigenspaces. The  measurement  defined by  $H$ has  possible outcomes $\{\lambda_i\}_{i=1}^k$. Upon measuring a 
state $\rho,$ the outcome $\lambda_i$ is observed with probability $\tr(\rho P_i)$. As a consequence, $\tr(\rho H)$ is the \emph{expectation} of  the outcome upon  performing the measurement $H$ on state $\rho$.
 
Throughout this paper, we {\em only} consider observables $H$  whose spectrum lies in $[-1,1]$, {\em i.e.,} observables satisfying  $H^2\preceq I$. Furthermore,  we assume throughout that  Alice has $n$ measurement choices given by the observables $M_1,\ldots, M_n$ and Bob has $m$ measurement choices  denoted  by $N_1,\ldots,N_m$. Accordingly,   if  their shared system  is in state $\rho$ and  Alice and Bob perform   measurements $M_i$ and $N_j$ respectively,  the expectation of the {\em product} of their answers is given by $\tr(\rho M_i\otimes N_j)$. The 
 $n\times n$ matrix whose $(i,j)$ entry is given by $\tr(\rho M_i\otimes N_j)$  is called a {\em quantum  correlation matrix}, {\em cf.} Definition~\ref{rgrgtr}. 

%In what ranks as one of the most important achievements  of modern
%physics, it was shown by  Bell in 1964   %by means of Bell's inequality
%that quantum theory allows the players to correlate their answers in   a way which cannot be reproduced  within the framework of classical physics
%% and as a consequence, enables them to {\em increase} their expected  payoff
%~\cite{B64, B66}.  Moreover, these correlations can be used as a resource  to  increase the efficiency of Alice and Bob in  various  distributed tasks or  even,   to allow them  accomplish  tasks that are classically impossible without~communicating, {\em e.g.} see  \cite{Brunner14}. 

 Tsirelson showed in \cite{TS87}  that, in the two-outcome case,  the set of quantum  correlation matrices {\em coincides} with the set of bipartite correlation matrices ({\em cf.} Theorem~\ref{thm:tsirelson1appendix}). This  characterization has found numerous  applications. As one example,  this  implies that one can optimize a linear function over the set of quantum  correlation matrices (up to arbitrary precision) in polynomial-time, using semidefinite programming.

 Additionally, Tsirelson   studied   the structural properties  of the quantum state and the observables  that are necessary  to generate an extremal  quantum  correlation matrix. Roughly speaking, he showed that   any quantum  correlation matrix can be generated using observables $\{M_i\}_i$ and $\{N_j\}_j$,  where all the anti-commutators $M_iM_j+M_jM_i$  are scalar multiples of the identity  (and analogously for the $N_j$'s).    Moreover, he showed that for {\em extremal} quantum  correlation matrices, such representations are essentially the only possible ones, {\em e.g.} see \cite[Theorem 3.1]{TS87} and \cite[Theorem 3.8]{TS93}. Using standard results concerning the representations of Clifford algebras, Tsirelson's work  implies that the local dimension  of a quantum system necessary to generate an extreme quantum  correlation matrix can be lower bounded in terms of its rank. %Specifically, we have that:  
 
 \begin{theorem}[\cite{TS87,TS93}]\label{thm:lowerboundlocaldimension}
Given an extreme bipartite correlation matrix  $C\in \ext(\cornm)$, the  local dimension of any   tensor product representation  is lower bounded by~$2^{\lfloor \rank(C)/2\rfloor}$. 
\end{theorem}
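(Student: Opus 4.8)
The plan is to deduce Theorem~\ref{thm:lowerboundlocaldimension} from Theorem~\ref{thm:bipartite}. Fix $C\in\ext(\cornm)$ with $r:=\rank(C)$, and suppose $C$ is realised by a state $\rho$ on $\C^d\otimes\C^d$ together with observables $M_1,\dots,M_s$ and $N_1,\dots,N_t$, $M_i^2\preceq I$, $N_j^2\preceq I$, so that $C_{ij}=\tr\big(\rho\,(M_i\otimes N_j)\big)$. I would begin with two normalisations that cannot increase the local dimension and hence are harmless for the bound. First, replace each $M_i$ (resp.\ $N_j$) by its compression to the support of the marginal $\rho_A$ (resp.\ $\rho_B$); this changes neither $C$ nor the fact that it is extreme, so we may assume both marginals are faithful. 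Second, use extremality of $C$: if some $M_i^2\prec I$ somewhere on the support of $\rho_A$, then a short norm‑preserving perturbation of $M_i$ exhibits $C$ as a proper convex combination of two bipartite correlation matrices, a contradiction; hence we may also assume $M_i^2=I$ and $N_j^2=I$ on the relevant supports.

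The core step is to convert this representation into a matrix factorization, in the precise sense of Theorem~\ref{thm:bipartite}, of an extreme correlation matrix projecting onto $C$, while keeping the ambient dimension at $O(d)$. Purify $\rho$, routing the purifying system into Bob's side so that Alice's space is untouched, and restrict to the supports; the resulting pure state $|\psi\rangle$ then lives on $\C^{d'}\otimes\C^{d'}$ with full Schmidt rank, where $d'\le d$. Choosing the Schmidt bases and letting $K$ be the positive square root of Alice's (now faithful) marginal, one has $|\psi\rangle=(K\otimes I)|\Omega\rangle$ for the standard maximally entangled $|\Omega\rangle$ and $\tr(K^2)=1$, whence $C_{ij}=\tr\big(KM_iK\,N_j^{\sfT}\big)$ with $M_i^2=I$ and $(N_j^{\sfT})^2=I$. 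Passing from each Hermitian $H$ to $H\oplus\bar H$ (so that the relevant Hilbert--Schmidt inner products become real) then yields a correlation matrix $E=\big(\begin{smallmatrix}G^A&C\\ C^{\sfT}&G^B\end{smallmatrix}\big)\in\calE_{s+t}$ with $\pi(E)=C$, together with a matrix factorization of it satisfying conditions $(i)$--$(iii)$ in which the role of the $X_i$'s and $Y_j$'s is played by the doubled observables $M_i\oplus\bar M_i$ and $N_j^{\sfT}\oplus\overline{N_j^{\sfT}}$ (matrices of size $2d'$) and the role of $K$ by $\tfrac{1}{\sqrt2}(K\oplus\bar K)$. Finally one checks that $E$ is an extreme point of $\calE_{s+t}$: the fibre $\pi^{-1}(C)\cap\calE_{s+t}$ is a face of the elliptope, and extremality of $C$ pins down the diagonal blocks $G^A,G^B$ in terms of $C$ — this is exactly the rigidity of extreme bipartite correlations — so that the fibre is a single point and $E$ is therefore extreme.

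Theorem~\ref{thm:bipartite} now applies to $E$ and gives that the matrices playing the role of $X_1,\dots,X_{\rank(E)}$ generate a copy of $\mathcal{C}_{\rank(E)}$. These are among the doubled operators, so projecting each onto its first summand is a surjective $\ast$-homomorphism onto the algebra generated by the corresponding undoubled operators — a subalgebra of $\mathrm{Mat}_{d'}(\C)$, and a nonzero quotient of $\mathcal{C}_{\rank(E)}$. Since $\mathcal{C}_m$ is either a matrix algebra ($m$ even) or a direct sum of two isomorphic matrix algebras ($m$ odd), every nonzero quotient of it has no faithful representation of dimension less than $2^{\lfloor m/2\rfloor}$; as our algebra is such a quotient and acts faithfully on $\C^{d'}$, we get $d\ge d'\ge 2^{\lfloor\rank(E)/2\rfloor}$. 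Since $C$ is a submatrix of $E$ we have $\rank(E)\ge r$, and therefore $d\ge 2^{\lfloor r/2\rfloor}=2^{\lfloor\rank(C)/2\rfloor}$, as claimed.

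I expect the delicate parts to be the two appeals to extremality of $C$ — the reduction to $M_i^2=N_j^2=I$ on the supports, and the rigidity statement that forces the completion $E$ to be extreme — together with the dimensional bookkeeping in the core step. The naive alternative of realising $C$ directly on the joint space $\C^d\otimes\C^d$ via $M_i\otimes I$ and $I\otimes N_j$ would produce matrices of size $d^2$ and only yield the exponent $\lfloor r/4\rfloor$; absorbing the (mixed) state through its square root keeps the matrices of size $d$, and then descending from the doubled Clifford representation of size $2d'$ to its size-$d'$ summand is what recovers the exact exponent $2^{\lfloor r/2\rfloor}$. All of the representation‑theoretic content, by contrast, is already isolated in Theorem~\ref{thm:bipartite} together with the classical fact that the irreducible representations of $\mathcal{C}_m$ have dimension $2^{\lfloor m/2\rfloor}$.
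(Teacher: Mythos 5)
Your overall strategy — reduce to a matrix factorization of the unique (extreme) elliptope completion of $C$, then invoke Theorem~\ref{thm:bipartite} — is exactly the paper's, but several of the individual steps are routed differently. Where the paper reduces to a pure state via the convexity argument of Lemma~\ref{lem:scwefew} (the state can be chosen as an extreme point of the face of density matrices yielding $C$, hence pure, and then one compresses to the Schmidt supports), you instead purify and route the ancilla to Bob; both accomplish the same reduction, but yours enlarges Bob's space before compressing it back down. Where the paper implicitly works with the real Gram matrix obtained by splitting $\mathrm{vec}(KX_i)$ into real and imaginary parts (as in Lemma~\ref{lem:representations}(c)$\Rightarrow$(a)), you make this explicit by the doubling $H\mapsto H\oplus\bar H$, which produces genuinely real Hilbert--Schmidt inner products at the cost of working with matrices of size $2d'$; the descent you then perform — projecting to the first summand, recognizing the image as a nonzero quotient of $\mathcal{C}_{\rank(E)}$, and using that any such quotient has no faithful representation below dimension $2^{\lfloor \rank(E)/2\rfloor}$ — is correct and cleanly recovers the exact exponent. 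Your "rigidity" appeal is precisely Lemma~\ref{Tsirelson:equality}(iii) of the paper, and your final bookkeeping ($\rank(E)\ge\rank(C)$, in fact $=$) is fine.

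There is one genuine gap. You establish $M_i^2=I$ and $N_j^2=I$ on the supports of $\rho_A,\rho_B$ \emph{before} purifying. But after you purify and then restrict to the support of the Schmidt decomposition of the purified state, Bob's compressed observables are $P\,(N_j\otimes I_E)\,P$ for $P$ the projection onto Bob's (now $d'$-dimensional) Schmidt support, and compression does not preserve squaring to the identity: $\bigl(P(N_j\otimes I)P\bigr)^2\preceq I$ but equality can fail. So your assertion that "$(N_j^{\sfT})^2=I$" after this step is unjustified as written. The fix is to re-invoke extremality \emph{after} the final compression, exactly as the paper does: if some compressed operator had $Y_{j^*}^2\neq I$, then $\|Y_{j^*}K\|^2=\tr(Y_{j^*}^2K^2)<\tr(K^2)=1$, giving (after vectorizing) a $C$-system with a non-unit vector, contradicting Lemma~\ref{Tsirelson:equality}(i). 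The paper avoids your ordering pitfall by deliberately leaving the squares as $\preceq I$ through Lemma~\ref{lem:scwefew} and only forcing equality at the very end. Relatedly, your perturbation argument for forcing $M_i^2=I$ needs a small caveat: perturbing $M_i$ in a direction where $M_i^2\prec I$ yields $C$ as an average of two correlation matrices, but one must also check the two halves actually differ from $C$; this is handled cleanly by the $C$-system / Lemma~\ref{Tsirelson:equality}(i) formulation, and I'd recommend quoting it directly rather than re-deriving it at the observable level.
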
 
 
% Results of this type are very useful and quite rare. Specifically, they allow to prove lower bounds on the size of a quantum system  necessary to optimally perform various distributed tasks, {\em e.g.},  see \cite{S11}. One   such extensively studied framework are   two-player {\em nonlocal games}, which mathematically correspond to optimizing a linear function over the set of quantum  correlation matrices. The link between the system size and representations of extreme points stems from the obvious geometric  fact that, the optimum of a linear functional   over a compact convex set is attained at an extreme point. 
% 

%%The set   $\pi(\calE_{n+m})$  
%Specifically, in his seminal work \cite{TS87}, Tsirelson showed that the set of bipartite correlation matrices  $\pi(\calE_{n+m})$  coincides with the set of correlations that can   be obtained by  performing {local} measurements on a bipartite quantum system; for details  see Section~\ref{sec:quantum}. 
%
% Furthermore, he  studied  matrix  factorizations of  bipartite correlation matrices that have operational interpretation within the context of quantum information~theory.  Among other things, Tsirelson   showed in \cite{TS87} that for extreme elements of  $\pi(\calE_{n+m})$, the matrices in such a factorization (in the finite-dimensional case) generate an appropriate Clifford algebra,  and as a consequence,  their size can be lower bounded. 

 Using  Theorem~\ref{thm:bipartite}, in Section \ref{sec:quantumcorrelations}   we derive  Theorem \ref{thm:lowerboundlocaldimension}.
For this  we show  that any extreme  quantum  correlation matrix can be completed in a unique way to a correlation matrix and also, this completion is an extreme  correlation matrix.  Thus, lower bounds  on matrix factorizations of extreme correlation matrices (such as Theorem~\ref{thm:bipartite}) imply lower bounds on  factorizations of extremal  quantum  correlation matrices (such as Theorem~\ref{thm:lowerboundlocaldimension}).

 The proof of Theorem~\ref{thm:bipartite}  relies on  ideas from   Tsirelson's work. Nevertheless,  Theorem~\ref{thm:bipartite} 
 strictly generalizes Theorem~\ref{thm:lowerboundlocaldimension}  as   the projection of an extreme point is not necessarily extreme, {\em i.e.},  there exist matrices    $\left(\begin{smallmatrix}A& C\\C^\sfT& B\end{smallmatrix}\right)\in\ext(\calE_{n+m})$ for which  $C\not \in \ext (\cornm)$. 
 A concrete  example of such a matrix is given at  the end of Section \ref{sec:quantumcorrelations}.

%
%Second, Theorem~\ref{thm:bipartite} is a strict generalization   Tsirelson's lower bound. Roughly speaking,  we will see that any extreme  quantum  correlation matrix can be completed in a unique way to a correlation matrix and moreover, this completion is an extreme point of the set of correlation matrices.  Thus, lower bounds  on matrix factorizations of extreme correlation matrices (such as Theorem~\ref{thm:bipartite}) imply lower bounds on  factorizations of extremal  quantum  correlation matrices (Tsirelson's work). 

%Using  Theorem~\ref{thm:bipartite}, in this section  we recover Tsirelson's lower bound, {\em i.e.},  we show  that the local dimension  of a quantum system necessary to generate an extremal  quantum  correlation matrix can be lower bounded in terms of its rank,   {\em cf.}~Theorem~\ref{thm:lowerboundlocaldimension}.     
% 
%   Furthermore, Theorem~\ref{thm:bipartite} provides  an alternative interpretation of Tsirelson's results,  by   highlighting  matrix factorizations of the elliptope as the   underlying mathematical object.  

 \paragraph{{\bf Further related work and open problems.}} 
 Theorem~\ref{thm:bipartite} provides  an alternative interpretation of Tsirelson's results,  by   highlighting  matrix factorizations of the elliptope as the   underlying mathematical object.
On the other hand, representations of generalized  Clifford algebras (associated with a real zero
polynomial) are related  to the existence of determinantal  representations of (powers of) hyperbolic polynomials, {\em e.g.} see \cite{netz1, netz2}. It is an interesting question whether   representations of  generalized  Clifford algebras are related in a similar manner  to  other spectrahedra, more general than the~elliptope. 

Let  $\mathcal{K} = \{ K_{d} \}_{d}$ be a family of cones that is  closed under direct sums and define $${\rm Gram}(n, \mathcal{K})=\{{\rm Gram}(u_1,\ldots,u_n): u_1,\ldots,u_n \in K_d, \text{ for some } d\in \mathbb{N}\},$$
 which  is  itself a convex cone.  For a matrix $X\in {\rm Gram}(n, \mathcal{K} )$ define {\em Gram-rank(X)} to be the least  $d\ge 1$ such that  $X$ can be realized as a  Gram matrix of vectors in $K_d$. As noted earlier, for a matrix $X$ in ${\rm Gram}(n, \{ \R^{d} \}_{d} ) = \mathcal{S}^{n}_{+}$ and ${\rm Gram}(n, \{ \R^{d}_{+} \}_{d} ) = \mathcal{CP}_{n}$, the Gram-rank of $X$ is upper bounded by a polynomial in the dimension $n$ while for ${\rm Gram}(n, \{ \mathcal{S}^{d}_{+} \}_{d} ) = \mathcal{CS}^{n}_{+}$, Theorem \ref{thm:cpsdlowerbound} establishes a sub-exponential lower bound. 

An interesting research direction is to prove upper and lower bounds for {\em Gram-rank(X)} in terms of the dimension for matrices $X \in  {\rm Gram}(n, \mathcal{K})$ for 
other families of cones  and also, to investigate the cases for which there is a super polynomial lower bound. The closely related notion of  generalized completely positive cones  has been studied in \cite{gowda1}.

Lastly, we mention that following the completion of this work, there have been exciting new results  concerning the closedness of the set of quantum behaviors that imply  the existence of completely positive semidefinite matrices (of fixed size) with arbitrarily large cpsd-rank \cite{Slof,DPP,musat}.

\subsection{Preliminaries}In this section we  introduce the most important  definitions, notation and background material  that we use throughout  this paper. 

\medskip

%
%\noindent {\bf  Some title.}
% In this work we  mostly consider  $\calE_{n+m}$, {\em i.e.}, the elliptope of size $n+m$. Each matrix in  $\calE_{n+m}$ admits a block decomposition   $\left(\begin{smallmatrix}A& C\\C^\sfT& B\end{smallmatrix}\right),$ where $A\in \calE_n$, $B\in \calE_m$ and $C=(c_{ij})\in~[-1,1]^{n\times m}$.
%For any  $E\in \calE_{n+m}$ 
%we  write $E=\gram(\{a_i\}_{i=1}^n,\{b_j\}_{j=1}^m)$ to indicate that the vectors $a_1,\ldots, a_n,b_1,\ldots,b_m$ form a {\em Gram  decomposition} for $E$, {\em i.e.}, %\note{consistency} 
%$$A_{ij}=\la a_i,a_{j}\ra,\  \forall i,j\in [n],  \quad   B_{ij}=\la b_i,b_j\ra, \ \forall i,j\in [m], 
%\text { and } $$ 
% 
%$$ c_{ij}=\la a_i,b_j\ra,\quad \text{ for all } i\in [n], j\in [m].$$
%
%
%
%\medskip 

\noindent {\bf  Linear Algebra. }%Throughout we set $[n]:=\{1,\ldots,n\}$.
  Throughout, we  use the shorthand notation $\{x_i\}_{i=1}^n:=\{x_1,\ldots,x_n\}$ and $[n]:=\{1,\ldots,n\}$. We denote by $\{e_i\}_{i=1}^n$ the standard  basis of~$\C^n$.  The canonical  inner product of two vectors  $x,y\in \R^n$ is denoted by $\la x,y\ra$. We write $\Span({\{x_i\}_{i=1}^n})$ for the  linear span of the vectors $\{x_i\}_{i=1}^n$.
  
We denote by $\lxy$  the set of $d\times d$ complex matrices and by $\calh_d$ (resp. $\mathcal{S}_d$) the set of $d\times d$ Hermitian (resp. symmetric) matrices. Given  a matrix $X\in \lxy$, its transpose is denoted by $X^\sfT$ and its conjugate transpose by $X^*$. Furthermore, we denote by $X\otimes~Y$   the Kronecker product  of $X$ and $Y$. Throughout,  we
equip $\calh_d$ with the Hilbert-Schmidt inner product $\la
X,Y\ra:=\tr(XY^*)$.  For a block matrix   $X=\left(\begin{smallmatrix}A& C\\C^\sfT& B\end{smallmatrix}\right)\in~\mathcal{S}_{n+m}$ we use   that 
\be\label{eq:usefulllll}
\rank(X)=\rank(A) \Longleftrightarrow \exists\  n\times m  \text{ matrix } \Lambda  \text{ such that }C=A\Lambda \text{ and } B=\Lambda^\sfT A\Lambda.
\ee

A matrix $X\in\calh_d$ is called {\em positive semidefinite} (psd) if
$\psi^*X\psi\ge 0,$ for all $\psi\in \C^d$. The set of $d\times d$
Hermitian psd (resp. symmetric psd) matrices forms a closed convex
cone  denoted by $\calh^d_+$ (resp.~$\calS^d_+)$. We sometimes also  write $X\succeq 0$ to indicate that $X$ is~psd. 
%We make repeated  use of the fact that  there exists an isometric isomorphism  ${\rm vec}: \calh_d\rightarrow \R^{d^2}$, for all $d\ge 1$.  
%extreme poiints, face, gram matrix

The  {\em Gram matrix} of a family of vectors
$\{x_i\}_{i=1}^n\subseteq \R^d$, denoted by $\gram(x_1,\ldots,x_n)$ or 
$\gram({\{x_i\}_{i=1}^n}),$ is the  symmetric  $n\times n$ matrix whose
{$(i,j)$} entry is given by $\la x_i,x_j\ra$, for all $i,j\in [n].$ It is easy to see that  an $n\times n$ matrix $X$ is positive semidefinite if and only if there exist vectors  $x_1,\ldots,x_n\in \mathbb{R}^k$ (for some $k\ge 1$) such that $X=\gram(x_1,\ldots,x_n)$.  For any Gram matrix we have that $\rank
\left(\gram( \{x_i\}_{i=1}^n)\right)=\dim (
\Span(\{x_i\}_{i=1}^n))$. Lastly, 
%Note that for any family of vectors  $\{x_i\}_{i=1}^n\subseteq \C^d$
%the matrix  $\gram( \{x_i\}_{i=1}^n)$ is positive semidefinite  and moreover, 
 if  $X=\gram(x_1,\ldots,x_n)$ we make use the following property:
 \be\label{eq:useful}
 \text{For any }   \lambda \in \mathbb{R}^n: X\lambda=0 \Longleftrightarrow \sum_{i=1}^nx_i\lambda_i=0.
 \ee
% {where $\dim (\mathcal{V})$ denotes  the dimension of
%vector space $\mathcal{V}$}.

We use  a well-known   correspondence between  $\lxy$  and $\mcy\otimes \mcx$ given by the  map
${\rm vec}:\lxy \rightarrow  \mcy\otimes \mcx,$ 
which is given by 
$\vecc(e_ie_j^*)=e_i\otimes e_j, $
on basis vectors and is extended linearly. The $\vecc(\cdot)$ map %is a linear  bijection  between $\lxy$ and $ \mcy\otimes \mcx$ and furthermore it 
is  an   isometry, {\em i.e.}, $\inner{X}{Y} = \inner{\vecc(Y)}{\vecc(X)}$ for all $X,Y\in \lxy$. %Notice that 
%\be\label{eq:vecmap}
%(X\otimes Y)\vecc(Z)=\vecc(XZY^\sfT),
%\ee
We also need the following fact: 
\begin{equation} \label{vecprop}
 \vecc(W)^*(X \otimes Y) \vecc(Z)= \vecc(W)^*\vecc(XZY^\sfT)=
\la W,XZY^\sfT\ra, 
\end{equation} 
% for Hermitian operators $X,Y,Z,W$  of the appropriate size (e.g. see \cite{Watrous}). %an identity that we use repeatedly in this paper. 
 
Any  vector $\psi\in \mcy\otimes \mcx$ can be {uniquely} expressed as ${\psi=\sum_{i=1}^d\lambda_i \, y_i\otimes x_i} $ for 
some integer $d\ge 1$, positive  scalars $\{ \lambda_i\}_{i=1}^d$, and orthonormal sets ${\{ y_i\}_{i=1}^d\subseteq \mcy}$ and $\{ x_i\}_{i=1}^d\subseteq~\mcx$. An expression of this form is known as a {\em Schmidt decomposition} for $\psi$ and is derived by the singular value  decomposition of $\vecc^{-1}(\psi)$.
%The scalars $\{\lambda_i\}_{i=1}^d$  and the integer $d$ are uniquely  defined and  are called the {\em Schmidt coefficients} and the {\em Schmidt rank} of $\psi$, respectively. 
Note that if  ${\psi=\sum_{i=1}^d\lambda_i \, y_i\otimes x_i} $ is a Schmidt decomposition for $\psi$, then {we have that} $\|\psi \|^2_2 =~\sum_{i=1}^d \lambda_i^2$. %{Lastly, given $x\in \mathbb{C}^n$ we define ${\rm Diag}{(x):=\sum_{i=1}^n x_i \, e_i e_i^*}$.} 

The {\em Pauli matrices} are given by
\[
{I_2:=
\begin{pmatrix}
1 & 0 \\
0 & 1
\end{pmatrix}}, \;
X:=
\begin{pmatrix}
0 & 1 \\
1 & 0
\end{pmatrix}, \;
Y:=\begin{pmatrix}
0 & -i \\
i  & 0
\end{pmatrix},
\; \text{ and } \;
Z:=
\begin{pmatrix}
1 & 0 \\
0 & -1
\end{pmatrix}. \]
Note that the {(non-identity)} Pauli matrices are Hermitian, their trace is equal to zero,  they have $\pm 1$ eigenvalues and they pairwise anti-commute. %{Many of the explicit observables we consider in this paper are constructed using the Pauli matrices.}

\medskip 

\noindent {\bf  Clifford algebras. } 
Throughout this section set   $d:=2^{\lfloor{ r/ 2}\rfloor}.$ It  is well-known that
\be\label{eq:Clifford}
 \mathcal{C}_r \cong \mathcal{M}_d, \  \text{ for even } r, \  \text{ and }\  \mathcal{C}_r \cong \mathcal{M}_d\oplus  \mathcal{M}_d,  \text{  for odd } r.
  \ee
For a proof of this fact and additional details the reader is referred to~\cite[Chapter 6]{GW}.
An explicit  representation  of $\mathcal{C}_r$ is obtained using the Brauer-Weyl matrices. Specifically, for  $r=2\ell$, the map  
 $\gamma_r: \mathcal{C}_r \rightarrow \calh_d$ given by 
 \be \label{eq:firsthalf}
 \gamma_r(z_i)= Z^{\otimes(i-1)}\otimes X\otimes I_2^{\otimes (\ell-i)} \in \calh_d,  \ (i\in [\ell]),
 \ee
 and
 \be\label{eq:secondhalf}
 \gamma_r(z_{i+\ell})= Z^{\otimes(i-1)}\otimes Y\otimes I_2^{\otimes (\ell-i)}\in \calh_d, \ (i\in [\ell]).
\ee
is a complex representation of $\mathcal{C}_r$. Furthermore, in the case where    $r=2\ell+1$ we define $\{\gamma_r(z_i)\}_{i=1}^{2\ell}$ as described in \eqref{eq:firsthalf} and \eqref{eq:secondhalf}  and  additionally set $\gamma_r(z_{2\ell+1})= Z^{\otimes \ell}.$  
%rightarrow \mathbb{C}$ 

Lastly, we collect some properties of the  map $\gamma_r$ which will be crucial for our results  in the next section.   Specifically, setting $d=2^{\lfloor{ r/ 2}\rfloor},$ for  all $x,y\in \R^n$ we  have the following: %that will be used in later sections. 
%we extend $\gamma$ linearly, i.e.,  $\gamma(x)=\sum_{i=1}^nx_i\gamma(z_i),$ for any  $x=\sum_{i=1}^n x_i e_i\in \R^n$.Lastly, we record 3 properties of  the map $\gamma$ which we use in later sections.
%For all $x\in \R^n$ we have that
 \be\label{eq:gamma}
% \tr\left(\gamma_r(x)\right)=0, \quad 
  \gamma_r(x)^2=\|x\|^2I_d\ \quad  \text{ and } \quad  d \la x,y\ra=\tr\left( \gamma_r(x)\gamma_r(y)\right).
 \ee 

\noindent {\bf  Convexity. }A set $C \subseteq \R^{n}$ is {\em convex} if for all $a, b \in C$  and $\lambda \in [0,1]$ we have that  $\lambda a + (1-\lambda) b \in C$. %A point $x\in C$ is called {\em extreme} provided that for any $y,z\in C$ such that $x={y\over 2}+{z\over 2}$ we have that $x=y=z$.
A subset $F\subseteq C $ is called a {\em face} of $C$ if $\lambda c_1+(1-\lambda)c_2\in F$ implies that $c_1,c_2\in F$, for all $c_1,c_2\in C$ and $\lambda\in [0,1]$. We say that $c$ is an {\em extreme point} of the convex set $C$ if the singleton   $\{c\}$ is a face of $C$. 
We denote by ${\rm ext} (C)$ the set of extreme points of the convex set $C$. By the
Krein-Milman theorem, any compact convex subset of $\R^n$ is equal to the convex hull of its extreme points, {\em e.g.} see \cite{bar}. 
\section{Correlation matrices, extreme points, and matrix factorizations}
\subsection{Extreme correlation  matrices and quadratic maps}\label{sec:basicproperties}
%\label{sec:qforms}
An  {\em operator valued quadratic map} is a function      
$Q: \R^r\rightarrow \calh_d$ (for some $d\ge 1$) such that $Q(ax)=a^2Q(x),$ for all $x\in \R^r$ and $a\in \R$.
The following    result from \cite{TS87} will be crucial.

\begin{lemma}\label{lem:qforms}
Consider vectors  $\{u_i\}_{i=1}^n\subseteq \R^r$ satisfying  $\Span(u_1,\ldots,u_n)=\R^r$  and  
\be\label{extremepoints}
%\forall R\in \calS_r, \ u_i^\sfT Ru_i=0, \ \forall i\in [n] \Longrightarrow R=0.
\Span\left(u_1u_1^\sfT,\ldots,u_nu_n^\sfT\right)=\mathcal{S}_r.
\ee
Then, for  any operator valued quadratic map $Q: \R^r\rightarrow \calh_d$ we have that %is determined by its values on $\{u_i\}_{i=1}^n$,~i.e., 
\be\label{eq:quadraticform} 
Q(u_i)=0, \ \text{ for all }  i\in [n]\  \Longrightarrow \ Q=0.
\ee

\end{lemma}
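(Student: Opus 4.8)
The plan is to argue that an operator valued quadratic map $Q:\R^r\to\calh_d$ is determined by a symmetric bilinear form, and then use the spanning hypothesis \eqref{extremepoints} to conclude that this bilinear form — hence $Q$ — vanishes identically once it vanishes on the $u_i$. First I would recall the standard polarization identity for quadratic maps: the associated map
\[
B(x,y):=\tfrac14\bigl(Q(x+y)-Q(x-y)\bigr)
\]
is symmetric and $\R$-bilinear in $(x,y)$, with $B(x,x)=Q(x)$. (The homogeneity condition $Q(ax)=a^2Q(x)$ gives $Q(0)=0$ and is exactly what one needs to turn the quadratic-form identity into bilinearity; additivity in each argument follows from the parallelogram-type identity $Q(x+y)+Q(x-y)=2Q(x)+2Q(y)$, which itself must first be established from the defining property of an operator valued quadratic map — I would include this brief verification or cite that it is routine.)

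Next I would view $B$ as a linear map on the space of symmetric matrices. Because $B$ is symmetric bilinear on $\R^r$, it factors through the symmetric square: there is a (unique) linear map $L:\mathcal{S}_r\to\calh_d$ with $L(xx^\sfT)=B(x,x)=Q(x)$ for every $x\in\R^r$, extended linearly to all of $\mathcal{S}_r$ since matrices of the form $xx^\sfT$ span $\mathcal{S}_r$. Now suppose $Q(u_i)=0$ for all $i\in[n]$. Then $L(u_iu_i^\sfT)=0$ for all $i$, and since by hypothesis \eqref{extremepoints} the matrices $u_1u_1^\sfT,\ldots,u_nu_n^\sfT$ span $\mathcal{S}_r$, linearity of $L$ forces $L\equiv 0$ on $\mathcal{S}_r$. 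In particular $Q(x)=L(xx^\sfT)=0$ for every $x\in\R^r$, i.e.\ $Q=0$, which is \eqref{eq:quadraticform}.

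I expect the only genuinely delicate point to be establishing that $B(x,y)$ is \emph{bilinear} purely from the scalar-homogeneity hypothesis $Q(ax)=a^2Q(x)$ — one must check $Q(x+y)+Q(x-y)=2Q(x)+2Q(y)$ does indeed hold for an operator valued quadratic map (this may require invoking the definition used in \cite{TS87}, namely that $Q$ is the diagonal of a bilinear form, or observing that in the intended applications $Q$ arises as $x\mapsto$ some Hermitian-matrix-valued expression quadratic in the entries of $x$, so the parallelogram law is automatic). Once bilinearity is in hand, the remaining steps — factoring through $\mathcal{S}_r$ and invoking the spanning condition — are immediate linear algebra. Note that the hypothesis $\Span(u_1,\ldots,u_n)=\R^r$ is not strictly needed for this argument, but it is natural to keep it since \eqref{extremepoints} is the operative assumption; I would remark that \eqref{extremepoints} already implies the $u_i$ span $\R^r$.
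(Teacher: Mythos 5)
Your proof is correct and takes essentially the same approach as the paper's: associate to the quadratic map a symmetric bilinear form (equivalently, a linear map on $\mathcal{S}_r$ via $xx^\sfT\mapsto Q(x)$), observe that vanishing of $Q(u_i)$ means this linear object annihilates each $u_iu_i^\sfT$, and use the spanning hypothesis \eqref{extremepoints} to force it to be identically zero; the paper organizes this by first treating $d=1$ (where the bilinear form has a representing matrix $M_Q\in\calS_r$ and $Q(u_i)=\langle M_Q,u_iu_i^\sfT\rangle$) and then reducing $d>1$ componentwise, while you factor through $\mathcal{S}_r$ directly, but these are the same argument. Your concern about establishing the parallelogram law from the stated definition (mere $2$-homogeneity) is a legitimate one, since $2$-homogeneity alone does not yield bilinearity of the polarization; however, the paper's own proof makes the identical implicit assumption by immediately speaking of "the symmetric matrix corresponding to the bilinear form associated to $Q$," and in every application $Q$ is manifestly a polynomial map of degree two (e.g.\ $Q(\sum_i\mu_i a_i)=(\sum_i\mu_i X_i)^2$), so the gap is one of phrasing in the definition rather than in either proof.
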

%\note{ If $S^{r}$ is symmetric matrices, should Hermitian be $H^{d}$? } 
\begin{proof} First consider the case $d=1$, {\em i.e.}, we have a quadratic form $Q: \R^r\rightarrow \R$. Let $M_Q\in \calS_r$ be the symmetric matrix, corresponding to the bilinear form associated  to $Q$, with respect to the standard basis $\{e_i\}_{i=1}^r$ of $\R^r$. By assumption \eqref{eq:quadraticform} we have that $Q(u_i)=u_i^\sfT M_Q u_i=0$ for all $i\in [n]$ and thus, \eqref{extremepoints} implies that $M_Q=0$. For the case $d>1$, since $\calh_d$ is a $d^2$-dimensional vector space over the real numbers, we can equivalently  view $Q$ as a quadratic form $Q: \R^r\rightarrow \R^{d^2}$ where $x\mapsto  (Q_1(x),\ldots,Q_{d^2}(x))$.   As all the $Q_i$'s are real valued quadratic forms, the proof is concluded  from the base~case.
\end{proof}
Next, we recall  the following well-known characterization of the extreme points of $\calE_n$.
 
 %size $n+m$ we denote by  $\calE_{n+m}$ 

%In view of Lemma \ref{lem:qforms},  the question arises as to how to find families of vectors satisfying~\eqref{extremepoints}. As explained in the following well-known result, this can be done  using extreme points  of the elliptope (e.g. see \cite{}). \note{fix}

\begin{theorem}[\cite{LT94}]\label{thm:extremepoints1}

Consider  $E\in\calE_n$ with $r=\rank(E)$ and let $E=\gram(\{u_i\}_{i=1}^n)$ where  $u_{i} \in \R^r$.  Then  $E\in \ext (\calE_n)$ if and only if %for any $R\in \calS_r$ we have that:
\be\label{extremepoints2}
%u_i^\sfT Ru_i=0, \ \forall i\in [n] \Longrightarrow R=0.
\Span\left(u_1u_1^\sfT,\ldots,u_nu_n^\sfT\right)=\mathcal{S}_r.
\ee
Equivalently, we have that $E\in \ext(\calE_n)$ if and only if 
\be\label{eq:extremality} 
\rank(E\circ E)=\binom{\rank(E)+1}{2},
\ee 
where  $X\circ Y$ denotes the entrywise product of   $X$ and $ Y$.

\end{theorem}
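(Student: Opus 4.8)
The plan is to prove the two equivalences in turn, relying on Lemma~\ref{lem:qforms}. For the first equivalence, the key observation is that $E\notin\ext(\calE_n)$ precisely when there is a nonzero symmetric matrix $M\in\calS_n$ with $M_{ii}=0$ for all $i\in[n]$ and $E\pm\varepsilon M\succeq 0$ for small $\varepsilon>0$; since the perturbed matrices must have the same support as $E$ on the range, this forces $M$ to be of the form $M=\gram(\{u_i\})$-compatible, i.e. there is a symmetric matrix $T\in\calS_r$ with $M_{ij}=u_i^\sfT T u_j$ and $u_i^\sfT T u_i=0$ for all $i$ (using $\Span(u_1,\ldots,u_n)=\R^r$, which holds because $r=\rank(E)$). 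The nonvanishing of such a $T$ is then exactly the failure of \eqref{extremepoints2}: indeed, $\Span(u_1u_1^\sfT,\ldots,u_nu_n^\sfT)=\calS_r$ holds if and only if the only $T\in\calS_r$ with $\inner{u_iu_i^\sfT}{T}=u_i^\sfT T u_i=0$ for all $i$ is $T=0$. Conversely, given such a $T\ne 0$, the matrix $M$ defined by $M_{ij}=u_i^\sfT T u_j$ is a nonzero feasible perturbation direction (it has zero diagonal, is symmetric, and $E\pm\varepsilon M$ stays psd for small $\varepsilon$ because its range is contained in that of $E$), so $E$ is not extreme.

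For the second equivalence, I would rewrite \eqref{extremepoints2} in terms of ranks. The span of the $n$ rank-one matrices $u_iu_i^\sfT$ in $\calS_r$ (a space of dimension $\binom{r+1}{2}$) equals all of $\calS_r$ if and only if its dimension is $\binom{r+1}{2}$. Now the dimension of $\Span(u_1u_1^\sfT,\ldots,u_nu_n^\sfT)$ equals the rank of the $n\times n$ Gram matrix of these vectors with respect to the Hilbert–Schmidt inner product, and $\inner{u_iu_i^\sfT}{u_ju_j^\sfT}=(u_i^\sfT u_j)^2=E_{ij}^2=(E\circ E)_{ij}$. Hence $\dim\Span(u_1u_1^\sfT,\ldots,u_nu_n^\sfT)=\rank(E\circ E)$, and \eqref{extremepoints2} becomes $\rank(E\circ E)=\binom{r+1}{2}=\binom{\rank(E)+1}{2}$, which is \eqref{eq:extremality}.

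The main obstacle is the careful handling of the feasibility of the perturbation $M$ in the first equivalence: one must argue that a perturbation direction for $E$ within $\calE_n$ necessarily has its range inside $\range(E)$ (so it can be written via some $T\in\calS_r$ in the $u_i$ coordinates), and that conversely every such $T$ gives a genuine perturbation keeping all diagonal entries equal to $1$ and staying psd. This is where the assumption $\Span(u_1,\ldots,u_n)=\R^r$ (equivalently $r=\rank E$) is used, and where Lemma~\ref{lem:qforms} enters, applied to the quadratic form $x\mapsto x^\sfT T x$ restricted to the $u_i$ — the lemma says that if this form vanishes on all $u_i$ then $T=0$, which is exactly the direction of the argument showing extremality implies \eqref{extremepoints2}. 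The rest is bookkeeping with the Hilbert–Schmidt inner product and the identity $(u_i^\sfT u_j)^2=(E\circ E)_{ij}$.
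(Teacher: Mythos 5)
The paper does not actually prove Theorem~\ref{thm:extremepoints1}: it cites~\cite{LT94} and treats the characterization as known. So there is no ``paper's own proof'' to compare against; I can only assess your argument on its merits.

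Your proof is essentially correct and is the standard one. The first equivalence is argued by the usual perturbation analysis: a direction $M$ with zero diagonal that keeps $E\pm\varepsilon M$ psd must satisfy $\ker(E)\subseteq\ker(M)$ (since $v^\sfT(E\pm\varepsilon M)v\ge 0$ and $Ev=0$ force $(E\pm\varepsilon M)v=0$, hence $Mv=0$), so $M=U^\sfT T U$ for a symmetric $T\in\calS_r$ with $\inner{u_iu_i^\sfT}{T}=u_i^\sfT T u_i=0$; conversely any nonzero such $T$ gives a nonzero $M=U^\sfT T U$ with $E\pm\varepsilon M=U^\sfT(I_r\pm\varepsilon T)U\succeq 0$ for small $\varepsilon$. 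The orthogonal complement of $\Span(u_iu_i^\sfT)$ in $\calS_r$ is trivial iff the span is all of $\calS_r$, which completes the first equivalence. The second equivalence via the Hilbert--Schmidt Gram matrix and $\tr(u_iu_i^\sfT u_ju_j^\sfT)=(u_i^\sfT u_j)^2=(E\circ E)_{ij}$ is exactly right.

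One small misattribution in your closing paragraph: Lemma~\ref{lem:qforms} takes the span condition \eqref{extremepoints2} as a hypothesis, so it is (at most) useful for the converse direction, namely ``\eqref{extremepoints2} implies extremality'' (given $E=\frac12(E_1+E_2)$, $T$ vanishes on the $u_i$, hence $T=0$ by the span condition, hence $E_1=E_2=E$). It cannot be the engine for ``extremality implies \eqref{extremepoints2},'' which you correctly prove by contrapositive and elementary duality in the first paragraph. This does not affect the validity of your argument; it only means that the main work is done by linear algebra alone, with Lemma~\ref{lem:qforms} being an optional shortcut for one direction.
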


Combining Lemma \ref{lem:qforms} with Theorem \ref{thm:extremepoints1} we have the following useful result. 
%Note that if \eqref{extremepoints} holds for all  symmetric matrices, it is also satisfied for Hermitian matrices. 
%In view of \eqref{extremepoints} we obtain  the  following  corollary, already   observed  in \cite{TS87}.

\begin{theorem}\label{lem:equalityquadratic}
Consider  $E\in\ext(\calE_n)$ with $r=\rank(E)$ and let $E=\gram(\{u_i\}_{i=1}^n)$ where $\Span(\{u_i\}_{i=1}^n)=\R^r$.
For any    two operator valued quadratic maps  $Q_1,Q_2 : \R^r\rightarrow \calh_d $ (for some $d\ge 1)$  satisfying  $Q_1(u_i)=Q_2(u_i),$ for all $i\in [n]$   we have that $Q_1=Q_2$.
\end{theorem}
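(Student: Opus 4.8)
The plan is to apply Lemma~\ref{lem:qforms} to the difference map $Q := Q_1 - Q_2$. First I would check that $Q$ is again an operator valued quadratic map: its codomain is $\calh_d$ since $\calh_d$ is closed under subtraction, and for every $x \in \R^r$ and $a \in \R$ we have $Q(ax) = Q_1(ax) - Q_2(ax) = a^2 Q_1(x) - a^2 Q_2(x) = a^2 Q(x)$. (Equivalently, writing each $Q_k$ entrywise in terms of its Hermitian coefficient matrices, each entry of $Q$ is a difference of homogeneous quadratic forms, hence itself a homogeneous quadratic form.)

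Next I would verify that the vectors $\{u_i\}_{i=1}^n$ satisfy the hypotheses of Lemma~\ref{lem:qforms}. The first requirement, $\Span(\{u_i\}_{i=1}^n) = \R^r$, holds by assumption. For the second requirement, $\Span(u_1 u_1^\sfT,\ldots,u_n u_n^\sfT) = \mathcal{S}_r$, I would invoke Theorem~\ref{thm:extremepoints1}: since $E \in \ext(\calE_n)$, $\rank(E) = r$, and $E = \gram(\{u_i\}_{i=1}^n)$ with $u_i \in \R^r$, equation~\eqref{extremepoints2} holds, which is exactly the span condition~\eqref{extremepoints} needed.

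Finally, since $Q(u_i) = Q_1(u_i) - Q_2(u_i) = 0$ for all $i \in [n]$ by hypothesis, Lemma~\ref{lem:qforms} yields $Q = 0$, i.e.\ $Q_1 = Q_2$. There is no substantial obstacle here: the proof is essentially the observation that a difference of operator valued quadratic maps is again one, combined with the fact that the extremality of $E$ translates precisely into the spanning hypothesis of Lemma~\ref{lem:qforms}. The only point requiring minor care is that Lemma~\ref{lem:qforms} needs \emph{both} that the $u_i$ span $\R^r$ and that the rank-one matrices $u_i u_i^\sfT$ span $\mathcal{S}_r$; the former is part of the statement's hypotheses, while the latter is supplied by Theorem~\ref{thm:extremepoints1}.
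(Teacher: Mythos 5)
Your proof is correct and follows exactly the same route as the paper: form $Q := Q_1 - Q_2$, invoke Theorem~\ref{thm:extremepoints1} to get the spanning condition~\eqref{extremepoints2} from extremality of $E$, and then apply Lemma~\ref{lem:qforms} to conclude $Q = 0$. The only difference is that you spell out two routine checks (that $Q$ is again an operator valued quadratic map and that both hypotheses of Lemma~\ref{lem:qforms} are met) which the paper leaves implicit.
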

\begin{proof}Consider the operator valued quadratic form  $Q:=Q_1-Q_2$.  By assumption we have that $Q(u_i)=0$ for all $i\in [n]$.  As  $ E\in\ext (\calE_n)$, by Theorem \ref{thm:extremepoints1} we have that \eqref{extremepoints2} holds. 
Lastly, Lemma \ref{lem:qforms} implies that~$Q=0$.
%We first prove the assertion for real-valued quadratic forms, i.e.,  $Q_1,Q_2: \R^r\rightarrow \R$. 
%As discussed in Section \ref{sec:qforms}, to space of quadratic forms  on $\R^r$ is in 1-to-1 correspondence to $\calS^r$. Furthermore, to any quadratic form $Q: \R^r\rightarrow \R$  corresponds a  matrix  $X_Q\in \calS^r$  such that $x^\sfT X_Q x=Q(x)$, for all $x\in \R^r$. In view of \eqref{extremepoints} the claim follows. 
%Next, consider the case of operator valued quadratic forms, i.e., say that   $Q_1,Q_2 : \R^r\rightarrow~\calh_d $ (for some $d\ge 1)$. 
\end{proof}

\subsection{Bipartite correlation matrices}%We first define bipartite correlation matrices.  
%\begin{defn}The set of {\em $(n,m)$ bipartite correlation matrices} is given by  $\pi(\calE_{n+m})$, where 
%   $\pi$ is  the projection operator given  by
%$$\pi: \calS^{n+m} \rightarrow  \R^{n\times m}, \ \text{ where } \begin{pmatrix}X_1& C\\C^\sfT& X_2\end{pmatrix} \mapsto  C.$$
% For $C\in \cornm$, any  $X\in \calE_{n+m}$ with  $\pi(X)=C$ is called an  {\em  completion} of~$C$.
%\end{defn}
By definition of $\pi$ (recall \eqref{eq:projection}) and $\mathcal{E}_{n+m}$ we have:
 
\be\label{qdrgrt}
\cornm=\left\{C \in [-1,1]^{n\times m}: c_{ij}=\langle u_i, v_j\rangle, \  \text{ where  } \ \|u_i\|=\|v_j\|=1, \ \forall i,j\right\}.
\ee
For   a bipartite correlation matrix  $C\in \cornm$, any   $E\in \calE_{n+m}$ with  $\pi(X)=C$ is called a
  {\em  completion} of~$C$.
As  was shown in \cite{TS87},    all equality constraints in  \eqref{qdrgrt} can be relaxed with  inequalities, without enlarging the set.  For completeness we give a  proof in the~Appendix. 
   %coincides extreme points  studies   vector representations of extreme bipartite correlation matrices. 
%The proof of this fact can be traced back to~\cite{TS87}.% but we include a short proof for completeness. 

\begin{lemma}[\cite{TS87}]\label{lem:unitnorm}
For all $n,m\ge 1$ we have that
\be\label{sdvfvergr} 
\cornm=\left\{C \in [-1,1]^{n\times m}: c_{ij}=\langle u_i, v_j\rangle, \  \text{ where  } \ \|u_i\|, \|v_i\|\le 1, \ \forall i,j\right\}.
\ee
\end{lemma}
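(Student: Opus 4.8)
The plan is to show that the right-hand set of \eqref{sdvfvergr}, call it $\mathcal{S}$, equals $\cornm$. The inclusion $\cornm\subseteq\mathcal{S}$ is immediate from \eqref{qdrgrt}, since unit vectors satisfy the norm bounds. For the reverse inclusion $\mathcal{S}\subseteq\cornm$, I would take $C\in\mathcal{S}$ with a realization $c_{ij}=\langle u_i,v_j\rangle$ where $u_i,v_j$ live in some $\R^k$ and $\|u_i\|,\|v_j\|\le 1$, and produce a new realization by \emph{unit} vectors, possibly in a larger space. The standard trick: lift each $u_i$ to $\hat u_i:=(u_i,\sqrt{1-\|u_i\|^2})\oplus 0\in\R^k\oplus\R\oplus\R$ and each $v_j$ to $\hat v_j:=(v_j,0,\sqrt{1-\|v_j\|^2})$, using two extra coordinates (one for Alice's side, one for Bob's). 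Then $\|\hat u_i\|=\|\hat v_j\|=1$, and $\langle\hat u_i,\hat v_j\rangle=\langle u_i,v_j\rangle+\sqrt{1-\|u_i\|^2}\cdot 0+0\cdot\sqrt{1-\|v_j\|^2}=c_{ij}$, because the padding coordinates are orthogonal between the two families. Hence $C\in\cornm$.

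Concretely, the key steps in order are: (1) note $\cornm\subseteq\mathcal{S}$ trivially; (2) fix $C\in\mathcal{S}$ and a sub-unit-norm Gram-type realization $\{u_i\}_{i=1}^n,\{v_j\}_{j=1}^m\subseteq\R^k$; (3) define the padded vectors $\hat u_i,\hat v_j\in\R^{k+2}$ as above; (4) verify $\|\hat u_i\|=\|\hat v_j\|=1$ and $\langle\hat u_i,\hat v_j\rangle=c_{ij}$, which shows $C$ arises as a genuine $(n,m)$ bipartite correlation matrix; (5) conclude $\mathcal{S}\subseteq\cornm$, hence equality. One should also double-check that the entries indeed lie in $[-1,1]$, but that is automatic once the $\hat u_i,\hat v_j$ are unit vectors by Cauchy--Schwarz, and it is already part of the definition of $\mathcal{S}$ anyway.

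I do not expect a genuine obstacle here; this is the routine ``pad with slack coordinates on disjoint supports'' argument. The only mild subtlety is making sure the slack coordinate used for the $u_i$'s is orthogonal to the one used for the $v_j$'s, so that cross inner products are unaffected — this is why two extra dimensions are needed rather than one. If one wanted the completion to be a correlation matrix of the block form $\left(\begin{smallmatrix}A&C\\C^\sfT&B\end{smallmatrix}\right)\in\calE_{n+m}$ explicitly, one could additionally record that $A=\gram(\{\hat u_i\})$ and $B=\gram(\{\hat v_j\})$ have unit diagonal, but for the statement as phrased only the vectors and their pairwise inner products are needed.
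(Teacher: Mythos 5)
Your proof is correct, but it takes a genuinely different route from the paper's. You argue constructively: given any sub-unit-norm realization, pad with two disjoint slack coordinates to obtain a unit-norm realization while leaving all cross inner products $\langle u_i, v_j\rangle$ untouched. The paper instead argues by convexity: it observes that the right-hand set $S$ is compact and convex, reduces to showing every \emph{extreme point} of $S$ already satisfies all norm constraints with equality, and establishes this by a perturbation argument (if $\|u_1\|<1$, replace $u_1$ by $(1\pm\delta)u_1$ and write $C$ as the midpoint of two distinct elements of $S$, contradicting extremality), then invokes Krein--Milman together with $\cornm\subseteq S$ and the convexity of $\cornm$. Your padding argument is shorter and more elementary, requiring no convex-analytic machinery. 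The paper's route has the advantage that it is the exact same perturbation technique re-used immediately afterward in the proof of Lemma~\ref{Tsirelson:equality}$(i)$ (where the extremality hypothesis really is needed and no padding trick is available), so the authors get two results for the price of one argument. Both are valid; your observation that exactly two extra dimensions suffice --- one per side, so that the slack coordinates live on disjoint supports and do not pollute the cross block --- is the right and only subtlety in the constructive version.
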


Given a bipartite correlation  $C\in \cornm$, any family of  vectors $\{u_i\}_{i=1}^n,\{v_j\}_{j=1}^m$ satisfying $c_{ij}=\langle u_i, v_j\rangle$ and $  \ \|u_i\|, \|v_j\|\le 1, $ for all $i\in [n], j\in [m]$  is called a {\em C-system}.  %As a consequence of the proof of Lemma~\ref{lem:unitnorm} we obtain the following useful result.
%
%\begin{corollary}\label{cor:useful}
%For any $C\in \ext(\cornm)$, all $C$-systems are  necessarily unit vectors.
%\end{corollary}
 
In the next result we summarize  some basic properties of  the set of completions of extreme bipartite  correlations. For completeness, we give  a short proof in the Appendix. 

\begin{lemma}[\cite{TS87}]\label{Tsirelson:equality} For any $C=(c_{ij})\in \ext(\cornm)$ we have that:
\begin{itemize}[itemsep=0.2em]
\item[$(i)$] All $C$-systems   necessarily consist of unit vectors; 
\item[$(ii)$] For any  $C$-system $\{u_i\}_{i=1}^n,\{v_j\}_{j=1}^m$ we have  that   ${\rm span}(\{u_i\}_{i=1}^n)={\rm span}(\{v_j\}_{j=1}^m)$;
\item[$(iii)$]  There exists a unique matrix  $E_C\in\calE_{n+m}$ satisfying $\pi(E_C)=C$. Furthermore,  we have  $E_C=\left(\begin{smallmatrix}A& C\\C^\sfT& B\end{smallmatrix}\right)\in\ext(\calE_{n+m})$ and     $\rank(E_C)=\rank(A)=\rank(B)=~\rank(C)$.
\end{itemize}
 %\note{Replace $X$ 
%by $X_C$ in proof.} 
\end{lemma}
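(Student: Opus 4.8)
The plan is to prove $(i)$ and $(ii)$ by short perturbation arguments, and then to leverage them together with the characterization of $\ext(\calE_{n+m})$ from Theorem~\ref{thm:extremepoints1} to obtain $(iii)$.

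For $(i)$ I would argue by contraposition. Suppose $\{u_i\}_{i=1}^n,\{v_j\}_{j=1}^m$ is a $C$-system in which some vector is not a unit vector; after exchanging the two families if necessary, say $\|u_1\|<1$. Since $C$ is extreme it is nonzero (as $\pm\mathbf{1}_n\mathbf{1}_m^\sfT\in\cornm$, the matrix $0$ is not extreme), so some $v_k$ is nonzero. I would then perturb $u_1$ to $u_1^{\pm}:=u_1\pm\epsilon v_k$ for small $\epsilon>0$, keeping all other vectors fixed; for $\epsilon$ small one has $\|u_1^{\pm}\|^2=\|u_1\|^2\pm 2\epsilon\langle u_1,v_k\rangle+\epsilon^2\|v_k\|^2<1$, so by Lemma~\ref{lem:unitnorm} the matrices $C^{\pm}$ with $(C^{\pm})_{1j}=c_{1j}\pm\epsilon\langle v_k,v_j\rangle$ and all other entries equal to those of $C$ lie in $\cornm$. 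Since $\tfrac12(C^++C^-)=C$ while $(C^+)_{1k}-(C^-)_{1k}=2\epsilon\|v_k\|^2>0$, this contradicts extremality of $C$. For $(ii)$: given a $C$-system, if some $v_1\notin\Span(\{u_i\})$, replace $v_1$ by its orthogonal projection $p$ onto $\Span(\{u_i\})$; this leaves every inner product $\langle u_i,v_1\rangle$ unchanged, so $\{u_i\},\{p,v_2,\ldots,v_m\}$ is again a $C$-system, but $\|p\|<1$, contradicting $(i)$. Hence every $v_j\in\Span(\{u_i\})$, and by the symmetric argument every $u_i\in\Span(\{v_j\})$, so the two spans coincide.

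For $(iii)$, existence of a completion is immediate from $C\in\cornm=\pi(\calE_{n+m})$. The heart of the proof --- and the step I expect to be the main obstacle --- is the claim that \emph{every} completion $E_0$ of $C$ is in fact an extreme point of $\calE_{n+m}$. To prove it I would fix a Gram representation $E_0=\gram(\{u_i\},\{v_j\})$ with the vectors spanning $\R^{r_0}$, $r_0:=\rank(E_0)$; the $u_i,v_j$ form a $C$-system, so by $(i)$ they are unit vectors and by $(ii)$ one has $\Span(\{u_i\})=\Span(\{v_j\})=\R^{r_0}$. If $E_0$ were not extreme, Theorem~\ref{thm:extremepoints1} would produce a nonzero symmetric $S\in\mathcal{S}_{r_0}$ with $u_i^\sfT S u_i=v_j^\sfT S v_j=0$ for all $i,j$. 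The key idea is to deform the $C$-system by the symmetric positive-definite twist $(I+tS)^{1/2}$ for small $|t|$: the vectors $(I+tS)^{1/2}u_i$ and $(I+tS)^{1/2}v_j$ stay unit vectors because $u_i^\sfT(I+tS)u_i=1$ and $v_j^\sfT(I+tS)v_j=1$, yet their pairwise inner products now equal $c_{ij}+t\,u_i^\sfT Sv_j$, so this family realizes the bipartite correlation matrix $C+tD$ with $D_{ij}:=u_i^\sfT Sv_j$. Thus $C\pm tD\in\cornm$, and extremality of $C$ (writing $C=\tfrac12((C+tD)+(C-tD))$) forces $D=0$; since $\{u_i\}$ and $\{v_j\}$ each span $\R^{r_0}$, $u_i^\sfT Sv_j=0$ for all $i,j$ then forces $S=0$, a contradiction. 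The delicate point is precisely this: one needs a deformation that simultaneously preserves all the unit-norm constraints and moves $C$ along a line inside $\cornm$, and the twist $(I+tS)^{1/2}$ does exactly this because the non-extremality witness $S$ is orthogonal to all the $u_iu_i^\sfT$ and $v_jv_j^\sfT$.

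Granting the claim, the rest of $(iii)$ is short. For uniqueness, if $E,E'$ both complete $C$ then so does their midpoint $\tfrac12(E+E')$; by the claim this midpoint is an extreme point of $\calE_{n+m}$, which forces $E=E'$. For the rank statement, write the unique (and, by the claim, extreme) completion as $E_C=\left(\begin{smallmatrix}A&C\\C^\sfT&B\end{smallmatrix}\right)$ and take its Gram vectors $\{u_i\},\{v_j\}$ spanning $\R^{r}$ with $r=\rank(E_C)$; as above each of $\{u_i\},\{v_j\}$ spans $\R^{r}$, so $\rank(A)=\dim\Span(\{u_i\})=r$, likewise $\rank(B)=r$, and $C=U^\sfT W$ with $U=[u_1\,\cdots\,u_n]$ and $W=[v_1\,\cdots\,v_m]$, where $W:\R^m\to\R^r$ is onto and $U^\sfT:\R^r\to\R^n$ is one-to-one, so $\rank(C)=r=\rank(E_C)$. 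The only external inputs are Theorem~\ref{thm:extremepoints1}, Lemma~\ref{lem:unitnorm}, the description \eqref{qdrgrt} of $\cornm$, and parts $(i)$--$(ii)$ established above.
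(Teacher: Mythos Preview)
Your proofs of $(i)$ and $(ii)$ are correct and essentially match the paper's: both use a perturbation to contradict extremality in $(i)$ (the paper scales $u_1$ by $1\pm\delta$ rather than adding $\pm\epsilon v_k$, but the idea is the same) and the orthogonal-projection trick in $(ii)$.

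For $(iii)$ your argument is correct but follows a genuinely different route from the paper. The paper proves \emph{uniqueness first}: given two completions $\gram(\{u'_i\},\{v'_j\})$ and $\gram(\{u''_i\},\{v''_j\})$, it forms the direct-sum $C$-system $u_i=(u'_i\oplus u''_i)/\sqrt 2$, $v_j=(v'_j\oplus v''_j)/\sqrt 2$, applies $(ii)$ to write each $u_i$ as a combination of the $v_j$'s, and reads off that the two Gram matrices coincide. Extremality then follows for free, since $\pi^{-1}(C)\cap\calE_{n+m}$ is a face of $\calE_{n+m}$ and a one-point face is an extreme point. You instead prove \emph{extremality first}, via the twist $(I+tS)^{1/2}$ applied to a non-extremality witness $S$ coming from Theorem~\ref{thm:extremepoints1}, and then deduce uniqueness because the midpoint of two completions would itself be an extreme completion. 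Your route leans on Theorem~\ref{thm:extremepoints1} (which the paper's proof of this lemma does not), but the $(I+tS)^{1/2}$ deformation is an elegant way to transport the elliptope non-extremality witness into a direction inside $\cornm$; the paper's direct-sum trick is more elementary and self-contained. Your rank computation via $C=U^\sfT W$ with $U^\sfT$ injective and $W$ surjective is also cleaner than the paper's separate treatment of the two inequalities.
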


\subsection{Matrix factorizations of correlation matrices}\label{sec:matrixreps}
%\subsection{Definition and physical motivation}
We are finally ready to show that any correlation matrix admits a matrix factorization as defined in Theorem~\ref{thm:bipartite}.

%We start this section with a new characterization of the elements of $\calE_{n+m}$.
\begin{lemma}\label{lem:representations}
Consider a real symmetric matrix $E\in \calS_{n+m}$. The following are equivalent: 
\begin{itemize}[itemsep=0.2em]
 %\item[$(a)$] There exist   Hermitian matrices $\{M_x\}_x,\{N_y\}_y, \Lambda$  such that
%\begin{itemize}
% \item[$(i)$]  $X={\rm Gram}(\{\Lambda M_x\}_x,\{\Lambda  N_y\}_y)$;
%\item[$(ii)$]    $M_x^2=N_y^2=I,\  \forall x,  y$;
%\item[$(iii)$]   $  \tr(\Lambda ^2)=1$ and $  \Lambda $ is psd.
%\item[$(iv)$] $[M_x,N_y]=0$  for all $x,y$;
%\end{itemize}
%\medskip 
\item[$(a)$] $E\in \calE_{n+m}$, i.e., there  exist  real unit vectors $\{a_i\}_{i=1}^n,\{b_j\}_{j=1}^m$  such that % with $\|u_x\|=\|v_y\|=1$ such~that 
$$E={\rm Gram}(a_1,\ldots,a_n,b_1,\ldots,b_m).$$

\item[$(b)$] There exist $d\times d$  Hermitian matrices $\{A_i\}_{i=1}^n, \{B_j\}_{j=1}^m$ such that 
\medskip 
\bi[itemsep=0.2em]
\item[$(i)$] $E={\rm Gram}(A_1,\ldots,A_n, B_1,\ldots,B_m)$;
\item[$(ii)$]  $A_i^2=B_j^2={1\over d}I_d, \text{ for all }  i\in [n], j\in [m].$
\ei 
\medskip 
\item[$(c)$] There exist $d\times d $ Hermitian matrices 
\medskip 
$\{X_i\}_{i=1}^n, \{Y_j\}_{j=1}^m,  K$  such that 
%Let   $X=\begin{pmatrix}X_1& C\\C^\sfT& X_2\end{pmatrix}\in \ext\calE_{n+m}$ with $\rank(X_1)=\rank(X_2)=\rank(X):=~r.$ Moreover,  let   $X=\gram(\{a_x\}_x\{b_y\}_y)$  where $\Span(\{a_x\}_x)=\Span(\{b_y\}_y)=\R^r$. Then,
%for any   family of matrices  $\{A_x\}_x,\{B_y\}_y,K$ satisfying  
\begin{itemize}[itemsep=0.2em]
 \item[$(i)$]  $E={\rm Gram}(KX_1,\ldots,KX_n, Y_1K,\ldots,Y_mK)$;
\item[$(ii)$]    $X_i^2=Y_j^2=I_d, \text{ for all }  i\in [n], j\in [m]$;
\item[$(iii)$]   $  \tr(K^2)=1$ and $  K$ is  positive definite.
%\item[$(iv)$] $[A_x,B_y]=0$ for all $x,y$.
\end{itemize}
\medskip 
\medskip 
%\item[$(e)$]
%There  exist  real vectors $\{u_x\}_x,\{v_y\}_y$  with $\|u_x\|, \|v_y\|\le1$ such that 
%$$X={\rm Gram}(\{ u_x\}_x ,\{v_y\}_y).$$
\end{itemize} 
\end{lemma}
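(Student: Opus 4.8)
\textbf{Proof proposal for Lemma~\ref{lem:representations}.}

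The plan is to prove the chain of implications $(a)\Rightarrow(b)\Rightarrow(c)\Rightarrow(a)$, so that all three statements are equivalent. The implication $(a)\Rightarrow(b)$ is the main substantive step: given unit vectors $\{a_i\}_{i=1}^n,\{b_j\}_{j=1}^m$ spanning some $\R^r$ (we may assume $r=\rank(E)$), I would use the Brauer--Weyl representation $\gamma_r$ of the Clifford algebra $\mathcal{C}_r$ recalled in the Preliminaries, and set $A_i:=\tfrac{1}{\sqrt{d}}\,\gamma_r(a_i)$ and $B_j:=\tfrac{1}{\sqrt{d}}\,\gamma_r(b_j)$ with $d=2^{\lfloor r/2\rfloor}$. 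By the identities in \eqref{eq:gamma}, $\gamma_r(x)^2=\|x\|^2 I_d$ and $d\la x,y\ra=\tr(\gamma_r(x)\gamma_r(y))$, so that $A_i^2=\tfrac1d\|a_i\|^2 I_d=\tfrac1d I_d$ (since $\|a_i\|=1$), and likewise $B_j^2=\tfrac1d I_d$; moreover $\tr(A_iA_j)=\tfrac1d\tr(\gamma_r(a_i)\gamma_r(a_j))=\la a_i,a_j\ra$, and similarly for the mixed and $B$-$B$ inner products. Hence $\gram(A_1,\ldots,A_n,B_1,\ldots,B_m)=\gram(a_1,\ldots,a_n,b_1,\ldots,b_m)=E$, giving $(b)$.

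For $(b)\Rightarrow(c)$, take the matrices $\{A_i\},\{B_j\}$ from $(b)$ and set $K:=\tfrac{1}{\sqrt d}\,I_d$, $X_i:=\sqrt d\,A_i$, $Y_j:=\sqrt d\,B_j$. Then $K$ is positive definite, $\tr(K^2)=\tr(\tfrac1d I_d)=1$, and $X_i^2=d A_i^2=I_d$, $Y_j^2=d B_j^2=I_d$, so $(ii),(iii)$ of $(c)$ hold. Since $KX_i=\tfrac{1}{\sqrt d}\cdot\sqrt d\,A_i=A_i$ and similarly $Y_jK=B_j$, the Gram matrix in $(c)(i)$ equals that in $(b)(i)$, namely $E$; note this also makes clear the remark in the main text that $K$ can always be taken a multiple of the identity.

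For $(c)\Rightarrow(a)$, suppose we are given $\{X_i\},\{Y_j\},K$ as in $(c)$. Put $a_i:=KX_i$ and $b_j:=Y_jK$, viewed as vectors in the real inner-product space $(\calh_d,\la\cdot,\cdot\ra)$ via $\la M,N\ra=\tr(MN^*)=\tr(MN)$ (all matrices involved are Hermitian). By $(i)$, $E=\gram(a_1,\ldots,a_n,b_1,\ldots,b_m)$, so $E\succeq 0$. It remains to check the diagonal entries are $1$, i.e.\ that the $a_i$ and $b_j$ are unit vectors: $\|a_i\|^2=\tr(KX_iX_iK)=\tr(KX_i^2K)=\tr(K^2)=1$ using $X_i^2=I_d$ and $(iii)$, and symmetrically $\|b_j\|^2=\tr(Y_jK^2Y_j)=\tr(K^2Y_j^2)=\tr(K^2)=1$. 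Hence $E\in\calE_{n+m}$, which is $(a)$ after replacing the Hermitian-matrix "vectors" by genuine real coordinate vectors of the same Gram matrix. I expect the only mild subtlety to be bookkeeping around the space $\R^r$ versus ambient dimensions and the passage between abstract inner-product-space vectors and coordinate vectors, together with verifying that the cyclicity of trace and Hermitianity make all the inner products real; the representation-theoretic content is entirely carried by \eqref{eq:gamma}.
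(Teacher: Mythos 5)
Your proposal follows essentially the same route as the paper's proof: $(a)\Rightarrow(b)$ via the Brauer--Weyl map $\gamma_r$ and \eqref{eq:gamma}, $(b)\Rightarrow(c)$ by scaling with $K=d^{-1/2}I_d$, and $(c)\Rightarrow(a)$ by reading the Gram condition back into vectors and using $\tr(K^2)=1$ together with $X_i^2=Y_j^2=I_d$. One small slip in $(c)\Rightarrow(a)$: $KX_i$ and $Y_jK$ are generally \emph{not} Hermitian (indeed $(KX_i)^* = X_iK$), so the parenthetical ``all matrices involved are Hermitian'' is false; what saves the computation is that $\tr\bigl((KX_i)(KX_i)^*\bigr)=\tr(KX_iX_iK)=\tr(X_i^2K^2)=\tr(K^2)=1$ holds regardless, and the paper handles the passage to real Euclidean vectors cleanly by setting $\tilde a_i=\vecc(KX_i)$ and $a_i=(\mathrm{Re}(\tilde a_i),\mathrm{Im}(\tilde a_i))$ (which also disposes of your flagged ``bookkeeping'' subtlety, since the hypothesis that $E$ is real makes the resulting Gram matrix equal to $E$).
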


\begin{proof}$(a)\Longrightarrow (b).$
% Let $E={\rm Gram}(\{ a_i\}_{i=1}^n ,\{b_j\}_j)$ where $\|a_i\|=\|b_j\|=1$, for all  $i\in [n], j\in~[m]$. 
 Let $d=2^{\lfloor{ r/ 2}\rfloor}$. By  the properties  of the map $\gamma_r$ (recall~\eqref{eq:gamma})  we have that
$\la a_i,b_j\ra=\left\la { \gamma(a_i) \over \sqrt{d}},{\gamma(b_j) \over \sqrt{d}}\right\ra$ and  $\gamma(a_i)^2=\gamma(b_j)^2=I_d,$ for all  $i\in [n], j\in [m]$.

$(b)\Longrightarrow (c).$ Set  $K=d^{-1/2}I_d$  and   $X_i=\sqrt{d}A_i$, $Y_j=\sqrt{d}B_j$, for all  $i\in [n], j\in [m]$.

 %$(c)\Longrightarrow (b).$  Let $K=\sum_i \lambda_i u_iu_i^*$ be a spectral  decomposition. The matrices  $M_x=A_x\otimes I \ (\forall x)$, $N_y=I\otimes B_y^\sfT\ (\forall y)$ and $ \Lambda=\sum_{i,j} \lambda_i\lambda_j u_iu_j^*\otimes u_i^*u_j$ 
 %\sum_{ij} \lambda_i\lambda_j u_iu_j^*\otimes u_iu_j^*$ 
% have the required properties.  

$(c)\Longrightarrow (a).$  For all $i\in [n]$ let $\tilde{a}_i={\rm vec}(K X_i)$ and set $a_i=({\rm Re}(\tilde{a}_i), {\rm Im}(\tilde{a}_i))$.  For all $j\in [m]$ define  $\tilde{b}_j$ and $b_j$  analogously.  As  the  entries  of $E$ are real numbers we have that   $E={\rm Gram}(a_1,\ldots,a_n,b_1,\ldots,b_m).$  Lastly,   note that 
$$1=\tr(K ^2)= \tr(X_i^2K ^2)=\langle  K X_i, K X_i\rangle=\|\tilde{a}_i\|^2=\|a_i\|^2, \text{ for all }  i\in [n].$$ Similarly we have that $\|b_j\|=1,$ for all $ j\in [m]$.
\end{proof}

We refer to any  family of matrices satisfying condition $(c)$ from Lemma~\ref{lem:representations} as a {\em matrix factorization} of $E$.  As already described in the introduction, our goal is to show that for extreme points of  $\calE_{n+m}$, we can place a lower bound on the size of matrix factorizations,  which is exponential in terms of $\rank(E)$. We note in passing that using the  same arguments we can also lower bound matrix factorizations  satisfying condition $(b)$ from Lemma~\ref{lem:representations}. Nevertheless,  lower bounds for matrix factorizations of type $(c)$ are stronger.

\subsection{Proof of the main technical result} In this section we prove Theorem~\ref{thm:bipartite}. 
This will follow as  a consequence  of  the following. 

\begin{lemma}\label{lemma:main}
Let   $E=\left(\begin{smallmatrix}A& C\\C^\sfT& B\end{smallmatrix}\right)$ be an extreme point of $\calE_{n+m}$ where $\rank(A)=\rank(E)=~n$. % \note{and $m\ge \binom{n}{2}$}.    %Furthermore, fix    scalars $\{\lambda_{yx}\}_{x,y}\subseteq \R$ such that  $
%b_y=\sum_{x=1}^n \lambda_{yx}a_x, $ for all   $y\in [m].$ 
Consider a  family of $d\times d$ Hermitian operators $\{X_i\}_{i=1}^n$ satisfying 
\be\label{eq:basicidentity}
X_i^2=I_d,\  \text{ for all } i\in [n], \  \text{ and } \ 
 \Big(\sum_{i=1}^n\lambda_{ij}X_i\Big)^2=I_d, \ \text{ for all }  j\in [m],
\ee
where $\Lambda=(\lambda_{ij}) $ is  an $n\times m$ matrix satisfying  $C=A\Lambda$ and $B=\Lambda^\sfT A\Lambda$ (the fact that such a matrix exists follows from~\eqref{eq:usefulllll}). Then we have  that 
\be\label{Cliffordidentity}
\Big(\sum_{i=1}^n \mu_i X_{i}\Big)^{2} = (\mu^\sfT A\mu) I_d, \ %\Big\| \sum_{x=1}^n \mu_x a_{x}\Big\|^{2} I_d,
 \text{ for all } \mu=(\mu_i)\in \R^n.
\ee
In particular, the algebra $\mathbb{C}[X_1,\ldots, X_n]$ generated  by $\{X_i\}_{i=1}^n$  is isomorphic to the rank-$n$ Clifford algebra $\mathcal{C}_n$ and thus, the size of the matrices $X_1,\ldots, X_n$ is lower bounded by~$2^{\lfloor n/2\rfloor}$.
\end{lemma}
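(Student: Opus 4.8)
The plan is to establish the Clifford identity \eqref{Cliffordidentity} by means of the uniqueness result for operator valued quadratic maps, Theorem \ref{lem:equalityquadratic}. First I would set up the two quadratic maps to compare. Since $\rank(A) = \rank(E) = n$, the submatrix $A$ is a full-rank Gram matrix, so we may write $A = {\rm Gram}(u_1,\ldots,u_n)$ with $\{u_i\}_{i=1}^n$ a basis of $\R^n$; by \eqref{eq:usefulllll} the whole of $E$ is then the Gram matrix of $u_1,\ldots,u_n, \Lambda^\sfT u_1, \ldots$ (more precisely of the columns read off from $C = A\Lambda$, $B = \Lambda^\sfT A \Lambda$). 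Define the operator valued quadratic map $Q_1 : \R^n \to \calh_d$ by $Q_1(\mu) = \big(\sum_{i=1}^n \mu_i X_i\big)^2$, and $Q_2(\mu) = (\mu^\sfT A \mu) I_d$. Both are genuinely quadratic maps in the sense of Section \ref{sec:basicproperties} (for $Q_1$ this is immediate from the definition; for $Q_2$ it is the quadratic form $A$ times the fixed matrix $I_d$).

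Next I would check that $Q_1$ and $Q_2$ agree on enough vectors. On the basis vectors $e_i \in \R^n$ we have $Q_1(e_i) = X_i^2 = I_d = A_{ii} I_d = Q_2(e_i)$ using \eqref{eq:basicidentity} and $A_{ii} = E_{ii} = 1$. But $n$ vectors are not enough to pin down a quadratic map on $\R^n$; this is exactly why I need the extremality hypothesis and the columns of $\Lambda$. For each $j \in [m]$, the $j$-th column $\lambda_{\cdot j}$ of $\Lambda$ gives $Q_1(\lambda_{\cdot j}) = \big(\sum_i \lambda_{ij} X_i\big)^2 = I_d$ by \eqref{eq:basicidentity}, while $Q_2(\lambda_{\cdot j}) = (\lambda_{\cdot j}^\sfT A \lambda_{\cdot j}) I_d = B_{jj} I_d = E_{n+j,n+j} I_d = I_d$. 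So $Q_1$ and $Q_2$ agree on the $n+m$ vectors $e_1,\ldots,e_n,\lambda_{\cdot 1},\ldots,\lambda_{\cdot m}$. Now the point is that these are, up to the change of basis $u_i \mapsto e_i$, precisely the Gram vectors of $E$: if $E = {\rm Gram}(w_1,\ldots,w_{n+m})$ with $w_i = u_i$ for $i \le n$ and $w_{n+j} = \sum_i \lambda_{ij} u_i$, then applying the linear map $T$ sending $u_i \mapsto e_i$ turns $(w_1,\ldots,w_{n+m})$ into $(e_1,\ldots,e_n,\lambda_{\cdot 1},\ldots,\lambda_{\cdot m})$. Since $E \in \ext(\calE_{n+m})$ and $\rank(E) = n$, Theorem \ref{lem:equalityquadratic} (applied with the Gram representation $E = {\rm Gram}(w_1,\ldots,w_{n+m})$, whose vectors span $\R^n$) together with the fact that precomposition by the invertible $T$ carries quadratic maps to quadratic maps, forces $Q_1 = Q_2$; equivalently, $Q_1 \circ T^{-1} = Q_2 \circ T^{-1}$ directly from Theorem \ref{lem:equalityquadratic}. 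This is \eqref{Cliffordidentity}.

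From \eqref{Cliffordidentity} the Clifford relations follow by polarization: expanding $\big(\sum_i \mu_i X_i\big)^2 = (\mu^\sfT A \mu) I_d$ and comparing the coefficient of $\mu_i \mu_j$ gives $X_i X_j + X_j X_i = 2 A_{ij} I_d$ for all $i,j$. Since $A$ is a full-rank positive semidefinite matrix with unit diagonal, I would now change basis once more to diagonalize the anticommutation form: writing $A = S^\sfT S$ with $S$ invertible and setting $\hat X = S^{-\sfT}$-combinations of the $X_i$ (i.e. replacing $(X_1,\ldots,X_n)$ by $Z_k = \sum_i (S^{-1})_{ki} X_i$, or more cleanly using a Cholesky/orthonormalization of $\{u_i\}$), one obtains Hermitian operators $Z_1,\ldots,Z_n$ with $Z_k Z_\ell + Z_\ell Z_k = 2\delta_{k\ell} I_d$, which is the defining relation of $\mathcal{C}_n$. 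Hence $\mathbb{C}[X_1,\ldots,X_n] = \mathbb{C}[Z_1,\ldots,Z_n]$ is a quotient of $\mathcal{C}_n$; and it is all of $\mathcal{C}_n$ rather than a proper quotient because the $Z_k$ (equivalently the $X_i$) are Hermitian and square to $I_d \ne 0$, so the representation is nonzero, and $\mathcal{C}_n$ being a direct sum of one or two matrix algebras \eqref{eq:Clifford} has no nonzero proper quotient that is generated by elements satisfying these relations — more carefully, any unital $\ast$-algebra generated by self-adjoint elements obeying the Clifford relations is a quotient of $\mathcal{C}_n$, and tracking which quotient, one sees $\mathbb{C}[X_1,\ldots,X_n] \cong \mathcal{C}_n$. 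Finally, since $\mathcal{C}_n$ has all its irreducible representations of dimension $2^{\lfloor n/2 \rfloor}$ (by \eqref{eq:Clifford}), the matrices $X_1,\ldots,X_n$, acting on $\C^d$, must have $d \ge 2^{\lfloor n/2 \rfloor}$.

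The main obstacle I expect is the bookkeeping around the two changes of basis: first transporting the hypotheses through the isomorphism $u_i \mapsto e_i$ so that Theorem \ref{lem:equalityquadratic} applies cleanly (one must be careful that the $m$ extra Gram vectors of $E$ really do become the columns of $\Lambda$, which is where \eqref{eq:usefulllll} and the identities $C = A\Lambda$, $B = \Lambda^\sfT A \Lambda$ are used), and second arguing that the algebra generated is the full Clifford algebra and not merely a quotient — the dimension lower bound only needs the quotient statement plus the fact that all irreps of $\mathcal{C}_n$ have dimension $2^{\lfloor n/2\rfloor}$, so if the isomorphism claim is delicate I would fall back on that weaker route.
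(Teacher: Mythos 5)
Your proposal is correct and takes essentially the same approach as the paper: the paper also defines the two operator-valued quadratic maps (parametrizing them by $\sum_i \mu_i a_i$ where $E=\gram(a_1,\dots,a_n,b_1,\dots,b_m)$ rather than by coordinates $\mu$ as you do, which is exactly your change of basis $u_i\mapsto e_i$), checks agreement on the $n+m$ Gram vectors, invokes Theorem~\ref{lem:equalityquadratic}, and then diagonalizes $A$ to obtain standard Clifford generators. The only minor deviation is that the paper uses the spectral decomposition of $A$ where you propose a Cholesky-type factorization, and you are right that the ``$\cong\mathcal{C}_n$'' claim is slightly delicate for odd $n$ (where the representation may only realize one of the two summands in \eqref{eq:Clifford}) --- the paper glosses over this too, and as you note the dimension bound only needs the fact that every nonzero representation of the Clifford relations has dimension at least $2^{\lfloor n/2\rfloor}$.
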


\begin{proof}
Consider   vectors $\{a_i\}_{i=1}^n,\{b_j\}_{j=1}^m\subseteq \R^n$ satisfying    
$$E={\rm Gram}(a_1,\ldots,a_n,b_1,\ldots,b_m)\  \text{ and }\  \Span(\{a_i\}_{i=1}^n)=\R^n.$$
Using  \eqref{eq:useful} combined with  the fact that $C=A\Lambda$ and $B=\Lambda^\sfT A\Lambda$ we get    
\be\label{lem:aux}
b_j= \sum_{i=1}^n \lambda_{ij}a_i, \ \text{ for all } j\in [m].
\ee
For $i\in \{1,2\}$  we define  operator valued quadratic maps  $Q_i: \R^n\rightarrow \calh_d$ as 
$$Q_1\Big(\sum_{i=1}^n \mu_ia_i\Big):=\Big(\sum_{i=1}^n \mu_iX_i\Big)^2 \quad \text{ and } \quad Q_2\Big(\sum_{i=1}^n \mu_ia_i\Big):=\Big\|\sum_{i=1}^n \mu_ia_i\Big\|^2I_d.$$ 
As the vectors $\{a_i\}_{i=1}^n$ form a basis of $\R^n$ the maps $Q_1$ and $Q_2$ are well-defined. 
Note that the claim \eqref{Cliffordidentity} is equivalent to $Q_1=Q_2$. Thus, by  Theorem \ref{lem:equalityquadratic} it suffices to show~that 
\be\label{cdvgvrgr} 
Q_1(a_i)=Q_2(a_i),\ \forall i\in [n] \ \text{ and } \ Q_1(b_j)=Q_2(b_j),\  \forall j \in [m].
\ee
First, note that  
 $$Q_1(a_i)=X_i^2=I_{d}=Q_2(a_i), \text{ for all } i \in [n],$$
 where we use  that $\|a_i\|=1,$ for all $i \in [n]$.
 Furthermore,  for all $j\in [m]$ we have 
 $$Q_1(b_j)=Q_1\Big(\sum_{i=1}^n \lambda_{ij}a_i\Big)=\Big(\sum_{i=1}^n \lambda_{ij}X_i\Big)^2=I_d,
$$
where for the  first equality we use \eqref{lem:aux} and  for the last equality \eqref{eq:basicidentity}. Similarly,  
 $$Q_2(b_j)=Q_2\Big(\sum_{i=1}^n \lambda_{ij}a_i\Big)=\Big\|\sum_{i=1}^n \lambda_{ij}a_i\Big\|^2I_d=\|b_j\|^2I_d=I_d,$$
 where we use 
  that $\|b_j\|=1,$ for all $j\in [m]$. Thus, \eqref{cdvgvrgr} holds which  in turn implies  \eqref{Cliffordidentity}. 
  
  %\note{ We could write the proof as showing that $Q_{1} - Q_{2} =0$ at $a_{x}, b_{y}$, it is annoying having to read/write two sets of equations 
  %at all stages}
  
  Lastly, as an immediate consequence of  \eqref{Cliffordidentity} we get  that
  \be 
  X_iX_{j}+X_{j}X_i=2 A_{ij}I_d, \ \text{ for all } i,j\in [n].
  \ee
  %A useful consequence of the discussion above is the following. Consider a positive definite    $r\times r$ matrix $C=(c_{ij})$ and a family of  matrices  $X_1,\ldots,X_r$ satisfying 
%$$X_iX_j+X_jX_i=2 c_{ij}I, \ \forall i,j\in [r],$$
%Then,  the size of the $X_i$'s is lower bounded by $2^{\lfloor \rank(C)/2\rfloor}$. To see this, 
Let   $A=\sum_{k=1}^n\lambda_k u_ku_k^\sfT$ be a spectral decomposition of $A$. By assumption $A$ is positive definite and thus, $\lambda_k>0 $ for all $k\in [n]$. Setting 
$$X_k':=\lambda_k^{-1/2}\sum_{i=1}^nu_k(i)X_i,\ \text{ for all }  k\in [n],$$
 we have that   
$$X_i'X_j'+X_j'X_i'=2\delta_{i,j}I_d, \ \text{ for all }  i,j\in [n],$$
and thus    $\mathbb{C}[ X_1,\ldots, X_n]$ is isomorphic to the rank-$n$ Clifford algebra $\mathcal{C}_{n}$.%
 %As a consequence, the size of  $X'_1,\ldots,X'_n$ (and thus also of the  $X_i$'s) is lower bounded by $2^{\lfloor n/2\rfloor}$.
  \end{proof}

We  now   give the proof of Theorem~\ref{thm:bipartite}. We restate it below for the ease of the reader.

\begin{theorem}
Let   $E=\left(\begin{smallmatrix}A& C\\C^\sfT& B\end{smallmatrix}\right)$ be an extreme point of $\calE_{n+m}$ where  $\rank(A)=\rank(E)=~n$. %\note{and $m\ge \binom{n}{2}$}.  
Consider  $d\times d$   Hermitian matrices    $\{X_i\}_{i=1}^n, \{Y_j\}_{j=1}^m,  K$~satisfying
\begin{itemize}[itemsep=0.2em]
 \item[$(i)$]  $E={\rm Gram}(KX_1,\ldots,KX_{n}, Y_1K,\ldots,Y_mK)$;
\item[$(ii)$]    $X_i^2=Y_j^2=I_d,\  \text{ for all }  i\in [n], j\in [m]$;
\item[$(iii)$]   $  \tr(K^2)=1$ and $  K$ is  positive definite.
%\item[$(iv)$] $[A_x,B_y]=0$ for all $x,y$.
\end{itemize}Then, the algebra $\mathbb{C}[ X_1,\ldots,X_n]$   is isomorphic to the rank-$n$ Clifford algebra $\mathcal{C}_{n}.$ In particular,  the size of the matrices  $X_1,\ldots, X_n$ is      lower bounded by~$2^{\lfloor n/2\rfloor}$. 
\end{theorem}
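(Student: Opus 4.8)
The plan is to reduce Theorem~\ref{thm:bipartite} directly to Lemma~\ref{lemma:main}, which already does all the real work. The only gap between the two statements is that Lemma~\ref{lemma:main} assumes we have Hermitian operators $\{X_i\}_{i=1}^n$ satisfying the two square identities in~\eqref{eq:basicidentity} (namely $X_i^2=I_d$ and $(\sum_i\lambda_{ij}X_i)^2=I_d$ for a matrix $\Lambda$ with $C=A\Lambda$, $B=\Lambda^\sfT A\Lambda$), whereas Theorem~\ref{thm:bipartite} instead hands us a matrix factorization $(i)$--$(iii)$ involving the extra positive-definite matrix $K$. So the heart of the proof is: starting from a matrix factorization in the sense of condition $(c)$ of Lemma~\ref{lem:representations}, extract the hypotheses of Lemma~\ref{lemma:main}. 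The first half, $X_i^2=I_d$, is literally hypothesis $(ii)$, so nothing to do there.

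The substantive step is to show $(\sum_{i=1}^n \lambda_{ij}X_i)^2=I_d$ for each $j\in[m]$, where $\Lambda=(\lambda_{ij})$ is chosen (via~\eqref{eq:usefulllll}, using $\rank(A)=\rank(E)$) so that $C=A\Lambda$ and $B=\Lambda^\sfT A\Lambda$. First I would unpack condition $(i)$: writing $\tilde a_i=\vecc(KX_i)$ and $\tilde b_j=\vecc(Y_jK)$, condition $(i)$ says $E=\gram$ of the real-and-imaginary-parts vectors, and in particular the Gram matrix of $\{KX_i\}\cup\{Y_jK\}$ in the Hilbert--Schmidt inner product equals $E$. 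By~\eqref{eq:useful} applied to this Gram decomposition, the relation $C=A\Lambda$ together with $B=\Lambda^\sfT A\Lambda$ forces the linear identity $Y_jK=\sum_{i=1}^n \lambda_{ij} KX_i$ in $\mathcal{M}_d$ for every $j\in[m]$ — this is the operator analogue of~\eqref{lem:aux}. Since $K$ is positive definite, hence invertible, I can cancel $K$ on the right? No — the equation is $Y_j K = K(\sum_i \lambda_{ij} X_i)$, so multiplying on the left by $K^{-1}$ gives $K^{-1}Y_jK=\sum_i\lambda_{ij}X_i$. Now $(K^{-1}Y_jK)^2=K^{-1}Y_j^2K=K^{-1}K=I_d$ using $Y_j^2=I_d$, which is exactly $(\sum_i\lambda_{ij}X_i)^2=I_d$. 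This establishes~\eqref{eq:basicidentity}.

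Here I should be careful about one subtlety, which I expect to be the main (minor) obstacle: extracting the operator identity $Y_jK=\sum_i\lambda_{ij}KX_i$ from the scalar/real statement $C=A\Lambda$, $B=\Lambda^\sfT A\Lambda$. The vectors appearing in the Gram decomposition $E=\gram(\cdot)$ live in a real inner-product space obtained by splitting each matrix $KX_i$, $Y_jK$ into real and imaginary parts of its $\vecc$; property~\eqref{eq:useful} says $E\lambda_j'=0$ iff $\sum (\text{vectors})(\lambda_j')_i=0$ for the full collection of $n+m$ vectors. Applying this to the vector $\lambda_j'=(\lambda_{1j},\ldots,\lambda_{nj},0,\ldots,0,-1,0,\ldots,0)$ (with the $-1$ in the $(n+j)$-th slot): one checks $E\lambda_j'=0$ precisely because the first $n$ rows give $A\lambda_{\cdot j}-C_{\cdot j}=0$ (i.e.\ $C=A\Lambda$) and the last $m$ rows give $C^\sfT\lambda_{\cdot j}-B_{\cdot j}=0$ (i.e.\ $B=\Lambda^\sfT A\Lambda$, using $C^\sfT=\Lambda^\sfT A$). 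Then~\eqref{eq:useful} yields $\sum_i\lambda_{ij}(\text{vec-vector of }KX_i)-(\text{vec-vector of }Y_jK)=0$ in the real space, which translates back to $\sum_i\lambda_{ij}KX_i=Y_jK$ in $\mathcal{M}_d$ since $\vecc$ is a real-linear isometry and taking real/imaginary parts is injective on $\mathcal{M}_d$.

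With~\eqref{eq:basicidentity} in hand, Lemma~\ref{lemma:main} applies verbatim: it gives $(\sum_i\mu_iX_i)^2=(\mu^\sfT A\mu)I_d$ for all $\mu\in\R^n$, hence $X_iX_j+X_jX_i=2A_{ij}I_d$, and then the change of basis $X_k'=\lambda_k^{-1/2}\sum_i u_k(i)X_i$ (using a spectral decomposition $A=\sum_k\lambda_k u_ku_k^\sfT$ with $\lambda_k>0$ since $A\succ0$) produces generators satisfying $X_i'X_j'+X_j'X_i'=2\delta_{ij}I_d$, so $\mathbb{C}[X_1,\ldots,X_n]=\mathbb{C}[X_1',\ldots,X_n']$ is a quotient of $\mathcal{C}_n$; conversely the $X_i'$ give a representation of $\mathcal{C}_n$, and since $\mathcal{C}_n$ is either a matrix algebra or a sum of two matrix algebras by~\eqref{eq:Clifford}, any nonzero representation has image of dimension $\ge 2^{\lfloor n/2\rfloor}$, forcing $d\ge 2^{\lfloor n/2\rfloor}$. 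I would note explicitly that the variants $(i')$ and $(i'',ii'')$ mentioned after the theorem statement are handled the same way — $(i')$ by cancelling $K$ on the left of $KY_jK=\dots$ type reasoning (or just replacing $Y_j$ by $KY_jK^{-1}$), and $(i'',ii'')$ by setting $X_i=\sqrt d\,A_i$, $K=d^{-1/2}I_d$ as in the proof of Lemma~\ref{lem:representations}.
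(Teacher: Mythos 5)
Your proof is correct and follows essentially the same route as the paper: derive $Y_jK=\sum_i\lambda_{ij}KX_i$ from \eqref{eq:useful} and the choice of $\Lambda$, use invertibility of $K$ and $Y_j^2=I_d$ to verify the hypotheses \eqref{eq:basicidentity} of Lemma~\ref{lemma:main}, and then invoke that lemma. The one place where your argument actually diverges from the paper's is the algebraic step in the middle, and your version is cleaner: you multiply the identity $Y_jK = K\bigl(\sum_i\lambda_{ij}X_i\bigr)$ on the left by $K^{-1}$, obtaining $K^{-1}Y_jK=\sum_i\lambda_{ij}X_i$ and hence $\bigl(\sum_i\lambda_{ij}X_i\bigr)^2 = K^{-1}Y_j^2K = I_d$, so Lemma~\ref{lemma:main} applies directly to the original Hermitian family $\{X_i\}$. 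The paper instead sets $\tilde X_i := KX_iK^{-1}$ and applies Lemma~\ref{lemma:main} to $\{\tilde X_i\}$; but $\tilde X_i$ is in general \emph{not} Hermitian (its adjoint is $K^{-1}X_iK$), whereas Lemma~\ref{lemma:main} is stated for, and its proof via Theorem~\ref{lem:equalityquadratic} genuinely uses, Hermitian operators so that $Q_1(\cdot)=\bigl(\sum_i\mu_i\cdot\bigr)^2$ lands in $\calh_d$. Your rearrangement sidesteps this small imprecision at no cost. Your careful justification of the passage from the real Gram identity to the operator identity $Y_jK=\sum_i\lambda_{ij}KX_i$ (via the kernel vector $\lambda_j'$ and real-linearity of $\vecc$ composed with taking real and imaginary parts) is also correct and is exactly the content the paper compresses into a citation of \eqref{eq:useful}.
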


\begin{proof}%({\bf of Theorem~\ref{thm:bipartite}})
As $\rank(E)=\rank(A)$, there exists   an $n\times m$ matrix  $\Lambda=(\lambda_{ij}) $  such that $C=A\Lambda$ and $B=\Lambda^\sfT A\Lambda$.  
% Consider vectors $\{a_x\}_x,\{b_y\}_y\subseteq \R^r$ where     
% $$X=\gram(\{a_x\}_x,\{b_y\}_y) \text{ and }\Span(\{a_x\}_x)=~\R^r.$$ As   $\Span(\{a_x\}_x)=\mathbb{R}^r$ 
% there exist scalars $\{\lambda_{yx}\}_x$ such that  
%\be\label{eq:lineardepen1}
%b_y=\sum_{x} \lambda_{yx}a_x, \ \forall y.
%\ee
Since   $ E={\rm Gram}(KX_1,\ldots,KX_{n}, Y_1K,\ldots,Y_mK)$ it follows by \eqref{eq:useful} that    
$$Y_{j}K=\sum_{i=1}^n\lambda_{ij} KX_i, \ \text{ for all }  j\in [m],$$ 
and as $K$ is positive definite (and hence invertible) we obtain 
\be\label{cdfefre}
Y_{j}=\sum_{i=1}^n\lambda_{ij} KX_iK^{-1},\  \text{ for all } j\in [m].
\ee
Lastly, define  
$$\tilde{X}_i=KX_iK^{-1}, \text{ for all } i\in [n].$$  By  assumption  we have that  $X_i^2=I_d$, for all $i\in [n]$, which implies that   $\tilde{X}_i^2=I_d$. 
Furthermore, as  $Y_j^2=I_d$  for all $j\in [m],$  it follows by  \eqref{cdfefre}   that 
\be\label{xcdsvdegvreg}
 \Big(\sum_{i=1}^n \lambda_{ij}\tilde{X}_i\Big)^2=I_d, \  \text{ for all }   j\in [m].
 \ee
 The proof  of the theorem is is concluded using  Lemma~\ref{lemma:main}.
 %%%%%%%%%%%%%%%%copiedproof
 % This shows that 
%$ 
%B_yK=\sum_{x} \lambda_{yx}KA_x, \ \forall y, $
%and as $K$ is positive definite and hence invertible we obtain 
%\be\label{cdfefre}
%B_y=\sum_{x} \lambda_{yx}KA_xK^{-1}, \forall y.
%\ee
%Set $\tilde{A}_x:=KA_xK^{-1} \ (\forall x)$ and note that  $\tilde{A}_x^2=I$. 
%As  $B_y^2=I,$  it follows by  \eqref{cdfefre}   that 
%\be\label{xcdsvdegvreg}
% \Big(\sum_{x} \lambda_{yx}\tilde{A}_x\Big)^2=I, \  \forall   y.
% \ee
% and the proof is concluded by Lemma~\ref{lemma:main}.
 %%%%%%%%%%%%%%%%%%%%%%%%%
 \end{proof}
%
%In the next section,  in fact we will not use   Theorem \ref{thm:bipartite} but rather, a slight variant   of it  which we state and prove below. Nevertheless, we have chosen to highlight  Theorem~\ref{thm:bipartite} as the main technical  contribution of this work since  it  is easier  to state and at the same time its proof builds on exactly the same ideas. Specifically, we have the following result.
%
%\begin{theorem}\label{thm:bipartite2}
%Let   $E=\left(\begin{smallmatrix}A& C\\C^\sfT& B\end{smallmatrix}\right)$ be an extreme point of $\calE_{n+m}$ where  $\rank(E)=\rank(A)$ and $A$ is nonsingular.  Consider  $d\times d$   Hermitian matrices    $\{X_i\}_{i=1}^{n} \{Y_j\}_{j=1}^m,  K$~satisfying
%\begin{itemize}
% \item[$(i)$]  $E={\rm Gram}(KX_1,\ldots,KX_{n}, Y_1K,\ldots,Y_mK)$;
%\item[$(ii)$]    $X_i^2=Y_j=I_d,\  \forall i\in [n], j\in [m]$;
%\item[$(iii)$]   $  \tr(K^2)=1$ and $  K$ is  positive definite.
%\end{itemize}
%Then, the algebra $\mathbb{C}\la X_1,\ldots,X_n\ra$ is isomorphic to the rank-$n$ Clifford algebra $\mathcal{C}_{n}.$ In particular,  the size of the matrices  $X_1,\ldots X_n$ is      lower bounded by~$2^{\lfloor n/2\rfloor}$.
%\end{theorem} 
%
%Note that the difference  between Theorem~\ref{thm:bipartite} \

\section{Cpsd  matrices  with sub-exponential cpsd-rank}\label{sec:cpsdrank}
In this section we use Theorem~\ref{thm:bipartite} to prove Theorem~\ref{thm:cpsdlowerbound}. The crux of the proof lies in the following  result. 

%give a self-contained proof  of the fact that there exist cpsd matrices whose cpsd-rank is exponential in terms of their size. Specifically, we prove the following result.
%\begin{theorem}[\cite{CPSD,cpsd2}]\label{thm:cpsdlowerbound}
%For  any {integer} $n\ge 1$ there  exists a matrix   $X_n\in\CPSD^{2n}$ such~that
%\be\label{cdwferfer}
% \CPSDR(X_n)\ge  {{2}^{\lfloor r_{\max}(n) / 2 \rfloor}},
% \ee
% where   $r_{\max}(n)$  is   the {greatest} integer
%satisfying $\binom{r+1}{2}\le~n$, {\em i.e.},
%$$r_{\max}(n)=\left\lfloor {\sqrt{1+8n}-1 \over 2}\right\rfloor.$$
%
% \end{theorem}
%The proof of Theorem~\ref{thm:cpsdlowerbound} given in \cite{CPSD} and \cite{cpsd2} consists  of two main parts: First, by  exploiting  the link  between quantum correlations and the cpsd-rank, the problem reduces to constructing  correlations that admit high-dimensional quantum representations.  The second step is to use  the lower bound  given  in Theorem~\ref{thm:lowerboundlocaldimension}.

%As we now show, Theorem~\ref{thm:cpsdlowerbound} follows as a simple consequence of Theorem~\ref{thm:bipartite}.

\begin{theorem}\label{cpsd:lowerbound}
For any     $C=(c_{ij})\in \ext (\calE_n)$ the matrix 
\be\label{cdfdrf}
P_C=\sum_{i,j=1}^n{1\over 4} \begin{pmatrix} 1+c_{ij} & 1-c_{ij} \\ 1-c_{ij} & 1+c_{ij}\end{pmatrix}\otimes e_ie_j^\sfT,
\ee
is cpsd and furthermore,   $\CPSDR(P_C)\ge 2^{\lfloor {\rank(C)/ 2} \rfloor}$.
 
\end{theorem}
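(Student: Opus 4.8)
The plan is to show that $P_C$ is cpsd by exhibiting an explicit psd factorization, and then to reduce the cpsd-rank lower bound to Theorem~\ref{thm:bipartite}. For the first part, I would start from the structure of $P_C$. Note that the $2\times 2$ block $\tfrac14\left(\begin{smallmatrix} 1+c_{ij} & 1-c_{ij} \\ 1-c_{ij} & 1+c_{ij}\end{smallmatrix}\right)$ can be rewritten as $\tfrac14\big(I_2 + c_{ij}X\big)$ where $X$ is the Pauli matrix. The key observation is that if $E_C=\gram(a_1,\dots,a_n,b_1,\dots,b_n)$ is a completion of $C$ as a correlation matrix (viewed on $2n$ indices, so $c_{ij}=\langle a_i,a_j\rangle$ when $C$ itself is a correlation matrix — here $C$ is already in $\calE_n$, so simply $c_{ij}=\langle u_i,u_j\rangle$ for unit vectors $u_i$), then by Lemma~\ref{lem:representations} there is a matrix factorization: $d\times d$ Hermitian matrices $\{X_i\}$ with $X_i^2=I_d$ and a positive definite $K$ with $\tr(K^2)=1$ such that $c_{ij}=\langle KX_i,KX_j\rangle = \tr(X_iK^2X_j)$. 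Then I would set $P_i := \tfrac12\big(I_2\otimes K\big)\big(I_2\otimes X_i + X\otimes X_i\big)\cdot(\text{something})$ — more precisely, I expect the right factorization to be of the form $P_i = M_i M_i^*$ with $M_i$ built from $\ket{+}$ or $\ket{-}$ tensored with $KX_i$, chosen so that $\tr(P_iP_j)$ reproduces $\tfrac14(1+c_{ij})$ on the diagonal blocks and $\tfrac14(1-c_{ij})$ off-diagonal. Concretely, taking $P_i = \kb{+}\otimes (KX_iX_iK) $-type terms won't mix the blocks, so instead I would use $P_{i} $ indexed to also encode the $2\times 2$ position: define for each $i$ two psd matrices and check the $2\times 2 \otimes$ structure directly. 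The cleanest route: observe $\tfrac14(I_2+c_{ij}X) = \tr_{(2)}\!\big[(\tfrac{1}{\sqrt2}(\ket{0}+\ket{1})\bra{0}\otimes K X_i)(\cdots)^*\big]$ and verify by a short computation using $X_i^2=I_d$, $\tr(K^2)=1$.

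For the lower bound, suppose $P_C=\gram_{\tr}(R_1,\dots,R_n)$ — wait, $P_C$ is indexed by pairs, so suppose $\CPSDR(P_C)=d'$, i.e. there are $d'\times d'$ psd matrices $\{R_{(s,i)}\}$, $s\in\{1,2\}$, $i\in[n]$, with $(P_C)_{(s,i),(t,j)}=\tr(R_{(s,i)}R_{(t,j)})$. The diagonal entries give $\tr(R_{(s,i)}^2)=\tfrac12$ for all $s,i$, so each $R_{(s,i)}$ has the same "norm". The off-diagonal entries within fixed $i$: $\tr(R_{(1,i)}R_{(2,i)}) = \tfrac14(1-c_{ii})=0$, so $R_{(1,i)}\perp R_{(2,i)}$, and $\tr(R_{(1,i)}R_{(1,j)})=\tfrac14(1+c_{ij})$, $\tr(R_{(1,i)}R_{(2,j)})=\tfrac14(1-c_{ij})$. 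From these I would extract the correlation data: set $S_i := R_{(1,i)} - R_{(2,i)}$ (Hermitian, not psd in general). Then $\tr(S_iS_j) = \tfrac14[(1+c_{ij}) - (1-c_{ij}) - (1-c_{ij}) + (1+c_{ij})] = c_{ij}$, and $\tr(S_i^2)=1$. Also $T_i := R_{(1,i)}+R_{(2,i)}$ satisfies $\tr(T_iT_j)=1$ for all $i,j$, forcing (by Cauchy–Schwarz equality) $T_i$ all equal to a common psd matrix $K^2$ with $\tr(K^2)=1$, and $K$ can be taken positive definite after restricting to its support. Then $S_i = K X_i K$-type with $X_i^2=I$? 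I need $R_{(1,i)}, R_{(2,i)}\succeq 0$ summing to $K^2$ with $\tr(S_i^2)=1=\tr(K^4)$-ish; massaging this (writing $R_{(1,i)} = K\Pi_i K$, $R_{(2,i)} = K(I-\Pi_i)K$ for $\Pi_i$ a projection after a change of basis by $K^{-1/2}$) should yield $S_i = K X_i K$ with $X_i = 2\Pi_i - I$, so $X_i^2 = I$. This realizes $C$ as $\gram(KX_1,\dots,KX_n)$ — but I need the bipartite form of Theorem~\ref{thm:bipartite}, so I should instead feed this into Theorem~\ref{lem:equalityquadratic}/Lemma~\ref{lemma:main} directly, or use the $(i')$ variant of Theorem~\ref{thm:bipartite} mentioned in the excerpt (with $m=0$, or with the $Y_j$'s being the $X_i$'s themselves since $E$ being extreme of $\calE_n$ plays the role of $A$ with $n=k$).

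The main obstacle I anticipate is the extraction step: turning the inequalities "$R_{(s,i)}\succeq 0$" plus the trace identities into the clean algebraic conclusion "$S_i = KX_iK$ with $X_i^2=I$ and $K\succ0$". The Cauchy–Schwarz equality argument forcing $T_i\equiv K^2$ is robust, but I must be careful on the kernel of $K$ (the $R_{(s,i)}$ could have components outside $\supp(K^2)$ — however $0\preceq R_{(s,i)}\preceq T_i = K^2$ forces $\supp(R_{(s,i)})\subseteq\supp(K^2)$, so we may work within that subspace where $K$ is invertible). Once $S_i$ is in the form $KX_iK$, I would check $X_i^2=I$: from $R_{(1,i)} = \tfrac12(K^2+S_i)$ and $R_{(2,i)}=\tfrac12(K^2-S_i)$ both psd and summing to $K^2$, conjugating by $K^{-1/2}$ gives $\tfrac12(I\pm K^{-1/2}S_iK^{-1/2})\succeq0$ summing to $I$; these are complementary, hence $K^{-1/2}S_iK^{-1/2}=2\Pi_i-I$ only if they're projections, which requires the additional input $\tr(S_i^2)=1=\tr(K^4)$ is not automatically enough — but I can avoid needing projections: Lemma~\ref{lemma:main} only needs $X_i^2=I$ where $X_i := K^{-1}S_iK^{-1}$ (note the asymmetric conjugation to match form $(i)$ or $(i')$), and this follows once I set it up so that $X_i$ is forced to square to identity. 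I'll then invoke Lemma~\ref{lemma:main} (since $C\in\ext(\calE_n)$ with $\rank(C)=\rank$ of the Gram submatrix) to conclude $\mathbb{C}[X_1,\dots,X_n]\cong\mathcal{C}_{\rank(C)}$, whence $d'\ge 2^{\lfloor\rank(C)/2\rfloor}$. I expect some care is also needed to get the asymmetric factor placement ($KX_i$ versus $KX_iK$) to line up exactly with the hypotheses of Theorem~\ref{thm:bipartite}; the remark about variants $(i')$ and $(i'')$ in the excerpt suggests this is only a cosmetic adjustment.
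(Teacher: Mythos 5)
Your overall plan matches the paper's proof: show $P_C$ is cpsd via a Clifford-style factorization, then run a size-optimal cpsd factorization through the machinery of Theorem~\ref{thm:bipartite}. The extraction you set up (defining $S_i=R_{(1,i)}-R_{(2,i)}$ with $\tr(S_iS_j)=c_{ij}$, and collapsing the $T_i=R_{(1,i)}+R_{(2,i)}$ to a single psd matrix by a Cauchy--Schwarz/norm argument) is exactly what the paper does. However, there are two real gaps you flag but do not close, and both are where the substance lies.

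First, you correctly identify that upgrading $X_i^2\preceq I$ to $X_i^2=I$ is the crux, and then leave it at ``this follows once I set it up so that $X_i$ is forced to square to identity,'' which is circular. The paper's resolution has two parts: writing $\tilde P^i_a := K^{-1/2}P^i_aK^{-1/2}$ gives $\tilde P^i_1+\tilde P^i_{-1}=I$ with both summands psd, so $X_i=2\tilde P^i_1-I$ has spectrum in $[-1,1]$; and then extremality of $C$ kills the possibility of any eigenvalue strictly inside $(-1,1)$. Concretely, $\|KX_i\|^2=\tr(X_i^2K^2)\le\tr(K^2)=1$, so the vectorized $\{KX_i\}_i, \{X_jK\}_j$ form a $C$-system, and for $C\in\ext(\cornm)$ (note $C\in\ext(\calE_n)$ indeed gives $C\in\ext(\cornm)$, since the diagonal-one constraint forces the Alice and Bob vectors to coincide) Lemma~\ref{Tsirelson:equality}$(i)$ forces all these norms to be exactly one, hence $X_i^2=I$ because $K\succ 0$. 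Without some version of this argument, your projection identity $K^{-1/2}S_iK^{-1/2}=2\Pi_i-I$ simply cannot be derived from the trace data alone.

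Second, you gesture toward ``use the $(i')$ variant with $m=0$, or with the $Y_j$'s being the $X_i$'s themselves,'' but the $m=0$ idea is a dead end (extreme points of $\calE_n$ are typically rank-deficient, so one cannot take $A=E$), and the other option needs a concrete choice of $E$. The paper's actual move is to observe that $C\in\ext(\cornm)$ has a \emph{unique} elliptope completion $E_C\in\calE_{2n}$ by Lemma~\ref{Tsirelson:equality}$(iii)$, and since $\left(\begin{smallmatrix}C&C\\C&C\end{smallmatrix}\right)$ is one completion, $E_C$ must equal it. This pins down $E_C={\rm Gram}(KX_1,\dots,KX_n,X_1K,\dots,X_nK)$ as an extreme point of $\calE_{2n}$ with a full-rank $\rank(C)\times\rank(C)$ principal block, which is exactly the form Theorem~\ref{thm:bipartite} requires. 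Your proposal does not supply this identification, and without it the invocation of Theorem~\ref{thm:bipartite} is unjustified. Two smaller points: for the cpsd membership you never land on a construction (the paper's is $\Gamma^i_a=\tfrac{1}{2\sqrt d}(I+a\gamma_r(u_i))$, whose pairwise inner products compute directly to $\tfrac14(1+abc_{ij})$ via~\eqref{eq:gamma}), and there is a notational slip: if you set the common $T_i$ equal to $K^2$, then what you derived is $\tr(K^4)=1$, not $\tr(K^2)=1$; the paper simply calls the common matrix $K$ itself, which is what Theorem~\ref{thm:bipartite}'s hypothesis $\tr(K^2)=1$ wants.
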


\begin{proof}Let $C=\gram(\{u_i\}_{i=1}^n)$ where $\{u_i\}_{i=1}^n\subseteq \R^r$ and $\|u_i\|=1, \forall i\in [n]$.  As suggested by \eqref{cdfdrf} we  think of $P_C$ as an $n\times n$ block matrix where each block has size  $ 2\times 2 $ and is  indexed by $\{\pm 1\}$. 

We first show that $P_C\in \CPSD^{2n}$.  For this,  set $d=2^{\lfloor{ r/ 2}\rfloor}$  and define 
\be
\Gamma^i_a={I+a\gamma_r(u_i)\over 2\sqrt{d}}, \quad  \text{ for all } i\in [n], a\in \{\pm 1\},
\ee
and note that by the properties of the  $\gamma_r$ map (recall \eqref{eq:gamma}), these matrices  are Hermitian psd. Furthermore, by direct calculation for all $i,j \in [n]$ and $a,b \in  \{\pm 1\}$ we  have that
\be \label{cdvdr}
\la \Gamma^i_a,\Gamma^j_b\ra = {d+ab\la \gamma_r(u_i),\gamma_r(u_j)\ra\over 4d}= {1+ab\la u_i,u_j\ra\over 4}= {1+abc_{ij} \over 4},
\ee
which shows that the matrices $\left\{\Gamma^i_a: i\in [n], a\in \{\pm 1\}\right\}$ form a  cpsd-factorization for $P_{C}$. Next we proceed to show the lower bound. %\note{reversed equation, reworded.} 

%Assume that  $\CPSDR(P_C)=d$ and
 Let
 $\left\{P^i_a:  i\in [n], a\in \{\pm 1\}\right\}$ be a size-optimal  cpsd-factorization for $P_C$. We now identify  some useful properties of these matrices which  we use  later  in the proof.  As the entries of $P_C$  in each $2\times 2$ block sum up to one we~get  %\note{clarify}  
\be\label{cdvrgr}
\sum_{a\in \{\pm 1\}} P^i_a=\sum_{a\in \{\pm 1\}} P^{j}_a, \quad \text{ for all } i,j\in [n].
\ee
For all $i\in [n]$ set    
\be\label{def:k}
K:=\sum_{a\in \{\pm 1\}} P^i_a,
\ee  which  is well-defined by \eqref{cdvrgr}. Furthermore,  note that $K$ is psd and $ \la K,K\ra=~1.$
 
 Since the cpsd factorization is  size-optimal  we may  assume without loss of generality that $K$ is  diagonal and positive definite. Indeed, let $K=Q\Lambda Q^*$ be its spectral decomposition. Clearly, the matrices 
$\left\{ Q^*P^i_aQ: i\in [n], a\in \{\pm 1\}\right\}$ are Hermitian positive semidefinite and 
 as $Q$ is unitary, it follows that they form a cpsd-factorization for  $P_C$. As a consequence, if $K$ was rank-deficient,  by restricting the matrices $\left\{ Q^*P^i_aQ: i\in [n], a\in \{\pm 1\}\right\}$ onto  the support of $K$, we would get another cpsd-factorization of smaller size. This contradicts the  assumption  that  $\left\{P^i_a:  i\in [n], a\in \{\pm 1\}\right\}$ was size-optimal. 
 
   %\note{clarify, also clarify positive definite which is being used non trivially} 
Our next goal is to use the cpsd-factorization  $\left\{P^i_a:  i\in [n], a\in \{\pm 1\}\right\}$ to obtain the matrix  factorization  to which Theorem~\ref{thm:bipartite} will be applied.  As  $K$ invertible  we have  that 
\be\label{cdfvfbgf} 
\la P^i_a,P^j_b\ra=\la K (K^{-1/2}P^i_aK^{-1/2}),(K^{-1/2}P^j_bK^{-1/2})K\ra=\la K\tilde{P}^i_a,\tilde{P}^j_bK\ra, \quad \forall i,j\in [n],
\ee
where we define 
\be\label{eq:rgrvrtgt}
\tilde{P}^i_a:=K^{-1/2}P^i_aK^{-1/2}, \quad \text{ for all } i\in [n], a\in \{\pm 1\}.
\ee
An easy calculation shows that
%Using \eqref{cdvdr} and \eqref{cdfvfbgf} we get that 
\be\label{cwcverfer}
c_{ij}=\sum_{a,b\in \{\pm 1\}}ab\left({1+abc_{ij}\over 4}\right)=\sum_{a,b\in \{\pm 1\}}ab\la P^i_a,P^j_b\ra=\sum_{a,b\in \{\pm 1\}}ab\la K\tilde{P}^i_a,\tilde{P}^j_bK\ra,
\ee
where for the second equality we use that $\left\{P^i_a:  i\in [n], a\in \{\pm 1\}\right\}$ is a   cpsd-factorization for $P_C$ and  the third  equality follows from \eqref{cdfvfbgf}. 
 Setting  $$X_i:=\tilde{P}^i_1-\tilde{P}^i_{-1},\quad \forall i\in [n],$$
it follows by \eqref{cwcverfer} that
\be\label{dfvrbhrt} 
c_{ij}=\la KX_i, X_jK\ra, \quad \forall i,j\in [n].
\ee
%\note{Add comment saying that above argument is to obtain factorization of $C$ starting from given cpsd-factorization to which $1.1$ will be eventually applied. } 

By \eqref{def:k} we have  $\sum_{a \in \{\pm 1\}} P^i_a=K$ which implies that   $\tilde{P}^i_1+\tilde{P}^i_{-1}=I, $ for all $i\in [n].$  Thus, for any $i\in [n]$, the Hermitian matrix  $X_i=\tilde{P}^i_1-\tilde{P}^i_{-1}=2\tilde{P}^i_1-I$ has spectrum in $[-1,1]$, {\em i.e.}, $X_i^2\preceq I$.  In fact, as $C\in \ext (\calE_n)$, it follows by Lemma~\ref{Tsirelson:equality} $(i)$  that  
\be\label{cdfgergre}
{X}_i^2=I, \quad \text{ for all }  i\in [n].
\ee
Note that the same argument was given in the proof of Theorem~\ref{thm:lowerboundlocaldimension}.

% Indeed, if $A_x^2\prec I$ for some $x$, then we would have that $\tr(A_x^2K^2)<\tr(K^2)=1$ (where for the strict inequality  we use the fact that $K$ is positive definite).  Thus, by vectorizing the matrices $\{KA_x\}_x$ and  $\{A_yK\}_y$ we would get a vector representation of $C$, where  at least one of the vectors has norm strictly less than one. 
%But, as $C\in \ext (\calE_n)$,  this possibility was excluded in the proof of Lemma \ref{lem:unitnorm}. 

%Clearly,  $X_C=\left(\begin{smallmatrix}C & C\\C & C \end{smallmatrix}\right)$  is an  completion of  $C$.    

We are now ready to conclude the proof. As     $C\in \ext(\calE_n)$,  Theorem \ref{Tsirelson:equality} $(iii)$  implies that  $C$ has a unique elliptope completion   $E_C\in\calE_{2n}$, which  moreover is an extreme point of $\calE_{2n}$. Nevertheless,    as $C\in \calE_n$,   the matrix $ \left(\begin{smallmatrix}C & C\\C & C \end{smallmatrix}\right)$ is clearly an elliptope completion of $C$. As a consequence we have that 
\be 
E_C=\left(\begin{matrix}C & C\\C & C \end{matrix}\right)\in \ext(\calE_{2n}),
\ee
which is the matrix to which we will apply Theorem~\ref{thm:bipartite}. The last step is to exhibit a matrix factorization for $E_C$. For this consider the psd matrix 
$$E_C':={\rm Gram}(KX_1,\ldots,KX_n,X_1K,\ldots,X_nK).$$ 
%We  show that  $X_1=X_2$. Clearly, the matrix $X_1$ is  psd. Furthermore, 
By \eqref{cdfgergre} we~have   
$$ \la KX_i,KX_i\ra=\tr(X_i^2K^2)=\tr(K^2)=1,\quad  \forall i\in [n],$$ 
and thus, $E_C'$ is an element of the elliptope $\calE_{2n}$. Finally,  by \eqref{dfvrbhrt} it follows that $E_C'$ is an elliptope  completion of $C$. Thus, again  by  Lemma \ref{Tsirelson:equality} $(iii)$ we get that   $E_C=E_C'$, {\em i.e.},
$$\left(\begin{matrix}C & C\\C & C \end{matrix}\right)={\rm Gram}(KX_1,\ldots,KX_n,X_1K,\ldots,X_nK)\in \ext(\calE_{2n}),$$
%$X_C$ is the unique completion of $C$ and that $X_C\in \ext(\calE_{2n}).$   Consequently, we get that $X=X_C$.  
and the  proof is concluded by  Theorem~\ref{thm:bipartite}.
%\note{Again add remark about what the last part of this argument is trying to do, with respect to hypotheses of theorem 1.1} 
\end{proof}

 To prove  Theorem~\ref{thm:cpsdlowerbound}, it remains to combine Theorem~\ref{cpsd:lowerbound}  with the following well-known fact: 
For any $X\in \ext (\calE_n)$   we have that $\rank(X)\le r_{\max}(n)$. Furthermore, for any integer $r\in [1,r_{\max}(n)]$ there exists a matrix $X_r\in \ext (\calE_n)$ with $r=\rank(X_r)$ \cite{GPW}.

\section{Relation to Tsirelson's work}\label{sec:quantumcorrelations}

In this section we  explain the connection between quantum information theory  and bipartite correlation matrices. The set $\cornm$ was  studied by Tsirelson due to its relevance  to quantum information theory. Algebraically, this is captured by the  following result found in \cite[Theorem 2.1]{TS87}. We give a brief   proof for~completeness.

\begin{theorem}\label{thm:tsirelson1appendix} Let $C=(c_{ij})\in  [-1,1]^{n\times m}$. Then,    $C\in \cornm$ if and only if 
%The following are equivalent:
% \item[$(a)$] $C\in \cornm$, i.e., there  exist  real vectors $\{u_x\}_{x=1}^n,\{v_y\}_{y=1}^m$  with
% \be\label{cdfegrshgrthyrth}
% c_{xy}=\langle u_x, v_y\rangle\  \text{ and } \  \|u_x\|, \|v_y\|\le 1, \ \forall x,y.
% \ee
%\item[$(b)$] 
there exist  Hermitian matrices    $\{M_i\}_{i=1}^n, \{N_j\}_{j=1}^m\subseteq \calh_d$  and a Hermitian matrix  $\rho \in \calh_{d^2}$ such that 
\bi[itemsep=0.2em] 
\item[$(i)$] $M_i^2\preceq I,\  N_j^2\preceq I$, for all $i\in [n], j\in [m];$
 %with eigenvalues in $[-1,1]$
\item[$(ii)$]  $\rho$ is positive semidefinite  with $\tr(\rho)=1 $;
\item[$(iii)$]  $c_{ij}=\tr\left((M_i\otimes N_j)\rho\right),$ for all $ i\in [n], j\in [m].$
\ei
\end{theorem}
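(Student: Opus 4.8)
The plan is to prove both implications by explicit construction: for the ``algebraic $\Rightarrow$ geometric'' direction I would use the Brauer--Weyl representation $\gamma_r$ of the Clifford algebra together with the maximally entangled state, and for the converse I would use square roots of $\rho$ inside the Hilbert--Schmidt space. No deep input is needed; everything reduces to the identities \eqref{vecprop}, \eqref{eq:gamma} and Lemma~\ref{lem:unitnorm} already recorded above.

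For the direction ($\Leftarrow$): given Hermitian $M_i,N_j$ with $M_i^2,N_j^2\preceq I_d$ and a density matrix $\rho\in\calh_{d^2}$, I would work in $\mathcal{M}_{d^2}$ with its Hilbert--Schmidt inner product and set $u_i:=(M_i\otimes I_d)\rho^{1/2}$ and $v_j:=(I_d\otimes N_j)\rho^{1/2}$. Using Hermiticity of $M_i,N_j$, cyclicity of the trace and $(M_i\otimes I_d)(I_d\otimes N_j)=M_i\otimes N_j$, one gets $\langle u_i,v_j\rangle=\tr\big((M_i\otimes N_j)\rho\big)=c_{ij}$, while $\|u_i\|^2=\tr\big((M_i^2\otimes I_d)\rho\big)\le\tr(\rho)=1$ since $M_i^2\otimes I_d\preceq I_{d^2}$ and $\rho\succeq0$, and similarly $\|v_j\|^2\le1$. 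Replacing each complex vector by $(\mathrm{Re}\,u_i,\mathrm{Im}\,u_i)$, $(\mathrm{Re}\,v_j,\mathrm{Im}\,v_j)$ — exactly as in the $(c)\Rightarrow(a)$ step of the proof of Lemma~\ref{lem:representations} — preserves norms and the real inner products $c_{ij}$, and $|c_{ij}|\le1$ by Cauchy--Schwarz, so Lemma~\ref{lem:unitnorm} yields $C\in\cornm$. For the direction ($\Rightarrow$): given $C\in\cornm$, choose via \eqref{qdrgrt} unit vectors $u_1,\dots,u_n,v_1,\dots,v_m\in\R^r$ with $c_{ij}=\langle u_i,v_j\rangle$, put $d:=2^{\lfloor r/2\rfloor}$, and take $M_i:=\gamma_r(u_i)$, $N_j:=\gamma_r(v_j)^{\sfT}$, and $\rho:=\psi\psi^{*}$ with $\psi:=d^{-1/2}\vecc(I_d)$. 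Then $\gamma_r(v_j)^{\sfT}$ is again Hermitian, \eqref{eq:gamma} gives $M_i^2=N_j^2=I_d$, and $\rho$ is psd with $\tr(\rho)=\|\psi\|^2=1$. Finally $\tr\big((M_i\otimes N_j)\rho\big)=\psi^{*}(M_i\otimes N_j)\psi$, and applying \eqref{vecprop} with $W=Z=I_d$, $X=\gamma_r(u_i)$, $Y=\gamma_r(v_j)^{\sfT}$ collapses this to $\tfrac1d\langle I_d,\gamma_r(u_i)\gamma_r(v_j)\rangle=\tfrac1d\tr\big(\gamma_r(u_i)\gamma_r(v_j)\big)=\langle u_i,v_j\rangle=c_{ij}$ by \eqref{eq:gamma}.

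I do not anticipate a real obstacle: the statement is a bookkeeping fact once the correct objects (the Clifford representation $\gamma_r$ and the maximally entangled state) are chosen. The two places demanding care are (a) the transpose placed on $N_j=\gamma_r(v_j)^{\sfT}$, which is precisely what makes the $\vecc$-identity \eqref{vecprop} reduce $\psi^{*}(M_i\otimes N_j)\psi$ to the clean inner product $\tfrac1d\langle I_d,\gamma_r(u_i)\gamma_r(v_j)\rangle$, and (b) the passage in the $(\Leftarrow)$ direction from complex Hilbert--Schmidt vectors to genuine real vectors, so that the real-vector description of $\cornm$ in Lemma~\ref{lem:unitnorm} is applicable; this works because each $c_{ij}$ is real and the real/imaginary splitting is norm-preserving.
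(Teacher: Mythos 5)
Your proof is correct and follows essentially the same route as the paper's: for ($\Rightarrow$) you take $M_i=\gamma_r(u_i)$, $N_j=\gamma_r(v_j)^\sfT$ and the maximally entangled state $\rho=\psi\psi^*$, exactly as in the paper (the only cosmetic difference is that you invoke \eqref{vecprop} rather than the direct identity $\psi_d^*(A\otimes B)\psi_d=\tfrac1d\tr(AB^\sfT)$, which amount to the same computation), and for ($\Leftarrow$) you use the Hilbert--Schmidt inner product with $\rho^{1/2}$ (multiplied on the right rather than the paper's left, an immaterial choice) followed by the real/imaginary split and Lemma~\ref{lem:unitnorm}. No gaps.
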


\begin{proof} Let  $C\in \cornm$ and consider vectors  $\{u_i\}_{i=1}^n,\{v_j\}_{j=1}^m\subseteq \R^r$ satisfying $c_{ij}=\langle u_i, v_j\rangle$ and  $   \|u_i\|, \|v_j\|\le 1,$ for all  $i\in [n], j\in [m]$ (such vectors exist by \eqref{sdvfvergr}). Then,   
\be\label{cdfvgrthtyhty}
c_{ij}=\la u_i,v_j\ra={\tr\left( \gamma_r(u_i)\gamma_r(v_j)\right)\over d} =\tr \left((\gamma_r(u_i)\otimes \gamma_r(v_i)^\top)\psi_d\psi_d^*\right),\ \forall  i\in [n], j\in [m],
\ee
where  $d=2^{\lfloor{ r/ 2}\rfloor}$ and $\psi_d:=d^{-1/2}\sum_{i=1}^de_i\otimes e_i\in \C^d\otimes \C^d$. To prove \eqref{cdfvgrthtyhty}, for the second  equality  we use \eqref{eq:gamma} and for the third  one
%{ne can check that it is indeed entangled.} 
that   $\psi_d^*(A\otimes B)\psi_d= \frac{1}{d} \, \tr\left(AB^\sfT\right),\ \forall A,B\in~\mathcal{M}_d.$
%was defined in \eqref{eq:maxent} and for the second equality we use  properties of $\gamma_r$ given in~\eqref{eq:gamma}.
% \note{Back reference to $\psi, \gamma$ maybe.}

Conversely, consider matrices $\{M_i\}_{i=1}^n, \{N_j\}_{j=1}^m$ and $\rho$ satisfying $(i), (ii)$ and $(iii)$. 
 Setting  $A_i=M_i\otimes I$ and  $B_j=I\otimes N_j$  for all $i\in [n], j\in [m]$ we get that 
 %it follows by  \eqref{wcfefer}  that 
$$c_{ij}=\tr(A_iB_j\rho)=\langle \rho^{1/2}A_i, \rho^{1/2}B_j\rangle, \ \forall i\in [n], j\in [m].$$ 

For all $i\in [n]$ let $\tilde{u}_i={\rm vec} (\rho^{1/2}A_i)$ and  $u_i=({\rm Re}(\tilde{u}_i), {\rm Im}(\tilde{u}_i))$.  For all $j\in [m]$ define $\tilde{v}_j$ and $v_j$  analogously.  As the entries of $C$ are real numbers we get that     $c_{ij}=\langle u_i, v_j\rangle, $ for all  $i\in [n], j\in [m] $. Lastly,   note that 
$$ \|u_i\|^2=\|\tilde{u}_i\|^2=\tr(A_i^2\rho)\le \tr(\rho)=1,$$
where for the inequality we  use that $ A_i^2\preceq I.$ Similarly, we get  $\|v_j\|\le 1, \ \forall j \in~[m]$.
\end{proof}

The algebraic representation of the set of bipartite correlations given above  turns out to have {\em operational interpretation} %\note{refer back to Bell?} 
within the context of   quantum information theory.

\begin{defn}\label{rgrgtr}
A matrix  $C=(c_{ij})\in  [-1,1]^{n\times m}$ is called a {\em quantum   correlation matrix} if  there exist 
Hermitian matrices    $\{M_i\}_{i=1}^n, \{N_j\}_{j=1}^m\subseteq \calh_d$  and a Hermitian matrix  $\rho \in \calh_{d^2}$ 
(for some $d\ge 1$) satisfying conditions $(i), (ii)$ and $(iii)$ from Theorem~\ref{thm:tsirelson1appendix}. 
 %and  a matrix  $\rho \in \calh_+^{d^2}$ with $\tr(\rho)=1 $ satisfying $c_{xy}=\tr((M_x\otimes N_y)\rho), \ \forall x,y.$ 
\end{defn}

We refer  to any such  family of matrices   as a  {\em tensor product  representation} of $C$ with {\em local dimension $d$}. In this section we  use Theorem~\ref{thm:bipartite} to lower bound   the local dimension of 
tensor product  representations corresponding to extreme points of the set of quantum~correlations. 

As a first step we show that without loss of generality,  we may only consider tensor product representations  where  $\rho$ is a rank-one matrix.  This is known but we   give a  short proof for completeness.

\begin{lemma}[\cite{SV}] \label{lem:scwefew} For any    $C\in \ext(\cornm)$, the minimum local dimension  of   a  tensor product representation can be  achieved  by a rank-one representation $\rho=\psi\psi^*$ satisfying: %\note{say that $e_{i}$ are the basis vectors, equivalence is upto local unitaries $U,V$? } 
\be\label{eq:specialform}
\psi=\sum_{i=1}^{d}\lambda_i e_i\otimes e_i^* \in \C^{d}\otimes \C^{d}, \quad  \lambda_i>0 \ (\forall i\in [d]), \quad  \text{ and } \quad \sum_{i=1}^{d} \lambda_i^2=1.
\ee
 \end{lemma}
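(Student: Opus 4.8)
The plan is to start from an arbitrary tensor product representation $\{M_i\}_{i=1}^n,\{N_j\}_{j=1}^m,\rho$ of $C$ with local dimension $d$ and reduce it, without increasing the local dimension, to a rank-one representation of the special form \eqref{eq:specialform}. First I would diagonalize $\rho$ (which is psd with $\tr(\rho)=1$) and write $\rho=\sum_k p_k\,\psi_k\psi_k^*$ as a convex combination of pure states $\psi_k\psi_k^*$ with $p_k>0$ and $\sum_k p_k=1$. Each $\psi_k\psi_k^*$, together with the \emph{same} observables $\{M_i\},\{N_j\}$, satisfies conditions $(i)$ and $(ii)$ of Theorem~\ref{thm:tsirelson1appendix}, and defines a matrix $C^{(k)}\in[-1,1]^{n\times m}$ via $c^{(k)}_{ij}=\tr\big((M_i\otimes N_j)\psi_k\psi_k^*\big)$; by Theorem~\ref{thm:tsirelson1appendix} each $C^{(k)}\in\cornm$. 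Since $C=\sum_k p_k C^{(k)}$ is a convex combination and $C\in\ext(\cornm)$, each $C^{(k)}$ equals $C$. Hence we may replace $\rho$ by any single pure state $\psi\psi^*$ appearing in its decomposition, keeping the same observables and the same local dimension $d$; this already gives a rank-one representation.

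Next I would bring the pure state $\psi\in\C^d\otimes\C^d$ into the form \eqref{eq:specialform} using the Schmidt decomposition recalled in the preliminaries: write $\psi=\sum_{i=1}^{d'}\lambda_i\,y_i\otimes x_i$ with $\lambda_i>0$, $\sum_i\lambda_i^2=\|\psi\|^2$, and orthonormal sets $\{x_i\},\{y_i\}$. There are two cosmetic points to handle. First, normalization: since $\rho=\psi\psi^*$ must have trace $1$, we have $\|\psi\|^2=1$, so $\sum_i\lambda_i^2=1$. Second, the local bases: extend $\{x_i\}_{i=1}^{d'}$ and $\{y_i\}_{i=1}^{d'}$ to orthonormal bases of $\C^d$ and let $U,V\in\C^{d\times d}$ be the unitaries mapping these bases to the standard basis $\{e_i\}$; then $(V\otimes U)\psi=\sum_{i=1}^{d}\lambda_i e_i\otimes e_i$ (with $\lambda_i:=0$ for $i>d'$). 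Replacing $M_i\mapsto V M_i V^*$ and $N_j\mapsto \bar U N_j \bar U^{*}$ — equivalently conjugating the whole representation by the local unitary $V\otimes U$ — preserves $(i)$, $(ii)$, $(iii)$ and the local dimension, and brings $\psi$ to the desired form, except possibly with some $\lambda_i=0$.

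Finally I would argue that we may take all $\lambda_i>0$. If $\lambda_i=0$ for some $i$, then the $i$-th local coordinate is never populated: restricting all the $M_i,N_j$ to the orthogonal complement of $\mathrm{span}\{e_i:\lambda_i=0\}$ (which has dimension equal to the Schmidt rank $d'$) yields a representation of $C$ with strictly smaller local dimension, contradicting minimality of $d$. Hence $d=d'$ and all $\lambda_i>0$, which is exactly \eqref{eq:specialform}. I do not anticipate a serious obstacle here; the one point requiring a little care is the bookkeeping with conjugate-transposes/complex conjugation in the Schmidt-to-standard-basis change (because $N_j$ enters \eqref{cdfvgrthtyhty} through a transpose in the correspondence between $\psi_d\psi_d^*$ and the trace inner product), but this is routine once one fixes conventions consistently with the $\vecc(\cdot)$ identities \eqref{vecprop} recalled in the preliminaries.
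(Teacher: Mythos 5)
Your proof is correct and follows essentially the same route as the paper's: decompose $\rho$ into pure states, use extremality of $C$ to replace $\rho$ by a single pure state, then Schmidt-decompose and apply a local change of basis. Two small remarks on the comparison. First, your argument for the reduction to a pure state is actually cleaner than the paper's phrasing: you explicitly write $\rho=\sum_k p_k\psi_k\psi_k^*$, observe that each $C^{(k)}\in\cornm$, and invoke extremality of $C$ to conclude $C^{(k)}=C$ for all $k$; the paper compresses this into the sentence ``by the extremality assumption, for every tensor product representation of $C$ we have that $\rho=\phi\phi^*$,'' which, read literally, is slightly imprecise (one cannot conclude $\rho$ is rank one, only that some pure $\psi_k\psi_k^*$ works). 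Second, where you extend the Schmidt bases to full unitaries and then separately discard the zero $\lambda_i$ by a minimality argument, the paper does both in one step by taking rectangular isometries $U=\sum_{k=1}^{d}e_k x_k^*$, $V=\sum_{k=1}^{d}e_k y_k^*$ of size $d\times d'$ (with $d$ the Schmidt rank) satisfying $UU^*=VV^*=I_d$, and compressing $M_i\mapsto UM_iU^*$, $N_j\mapsto VN_jV^*$; the Schur-complement argument shows the compressions still have spectrum in $[-1,1]$. The effect is the same. Your worry about complex conjugation of $N_j$ is unfounded in this setting: the change of basis here is a genuine local unitary $V\otimes U$ acting on the tensor product, so $N_j\mapsto UN_jU^*$ with no bar (the transpose only appears when one passes to the $\mathrm{vec}$ picture as in \eqref{vecprop}, which you are not using in this step). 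The stray $\bar U$ in your write-up should just be $U$, but as you note this is a matter of fixing conventions and does not affect the argument.
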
 
 
 \begin{proof}The extreme points of the compact convex set  $\{\rho: \rho\succeq 0, \ \tr(\rho)=1\}$  are matrices of  the form $\phi\phi^*$, where $\|\phi\|=1$. 
 Thus, by the extremality assumption,  for every tensor product representation  of $ C$ we     have that $\rho= \phi\phi^*$, for some vector $\phi$ with   $\|\phi\|=1$. 
  
 It remains to show that given  a rank-one  tensor product representation of $C$ with local  dimension $d'$, {\em i.e.},
 $$c_{ij}=\tpsi^*(M_i\otimes N_j)\tpsi, \ \forall i,j \  \text{ where }\ \phi\in \C^{d'}\otimes \C^{d'},$$
  we can construct another rank-one  tensor product representation of $C$  satisfying \eqref{eq:specialform}, whose  local dimension is upper bounded by $d'$.  
  
  For this, 
  consider  a Schmidt decomposition   of $\tpsi$, {\em i.e.},  
  $\tpsi=\sum_{k=1}^{d} \lambda_k x_k\otimes y_k,$ where   $\{ \lambda_k\}_{k=1}^{d}$ are strictly positive, $\sum_{k=1}^{d}\lambda_k^2=1$, and   $\{ y_k\}_{k=1}^{d}, \{ x_k\}_{k=1}^{d}\subseteq\mathbb{C}^{d'}$ are orthonormal vectors. Clearly we have  that $d\le d'$.  Define the $d\times d'$ matrices     $U := \sum_{k=1}^{d} \, e_k x_k^*$ and  $V:= \sum_{k=1}^{d}e_ky_k^*$,  and note that  the vector 
  $$\psi:= (U \otimes V) {\tpsi}=\sum_{k=1}^{d}
\lambda_k {e_k}\otimes {e_k} \in \C^{d} \otimes
\C^{d},$$ satisfies $\tr(\psi\psi^*)=1$. Moreover, as $UU^*=VV^*=I_{d},$ it follows that the Hermitian $d\times d$ matrices  $ \{\tilde{M}_i:=U M_{i} U^{*}\}_{i=1}^n$  and $ \{\tilde{N}_j:=V N_{j}V^*\}_{j=1}^m$ have spectrum in $[-1,1]$. 
To see this,  recall that for  any Hermitian matrix $X$, the condition $X^2\preceq I$ is equivalent to $\left(\begin{smallmatrix}I & X\\X& I\end{smallmatrix}\right)\succeq 0.$ ({\em e.g.} by using Schur complements). By assumption we have that $M_i^2\preceq I$ and $N_j^2\preceq I$, for all $i,j$.  Thus,   we have that $\left(\begin{matrix}I_{d'} & M_i\\M_i& I_{d'}\end{matrix}\right)\succeq 0$ which implies 
$$\left(\begin{matrix}I_{d} & \tilde{M}_i\\\tilde{M}_i& I_{d}\end{matrix}\right)= \left(\begin{matrix}U & 0\\0 & U\end{matrix}\right)\left(\begin{matrix}I_{d'} & M_i\\M_i& I_{d'}\end{matrix}\right) \left(\begin{matrix}U^* & 0\\0 & U^*\end{matrix}\right)\succeq 0.$$
Similarly, we get $\tilde{N}_j^2\preceq I,\ \forall j\in [m].$ Lastly, an easy calculation gives   that 
$$c_{ij}=\tpsi^*({M}_i\otimes {N}_j)\tpsi=\psi^*(\tilde{M}_i\otimes \tilde{N}_j)\psi, \quad  \forall i\in [n], j\in [m],$$ and the proof is concluded.
\end{proof}

As an application application of Theorem~\ref{thm:bipartite}  we now prove Theorem \ref{thm:lowerboundlocaldimension}.

%\begin{theorem}\label{thm:lowerboundlocaldimension}
%Given an extreme bipartite correlation matrix  $C\in \ext(\cornm)$, the  local dimension of any   tensor product representation  is lower bounded by~$2^{\lfloor \rank(C)/2\rfloor}$. 
%\end{theorem}

\begin{proof}({\bf of Theorem \ref{thm:lowerboundlocaldimension}}) By Lemma~\ref{lem:scwefew} we may only consider rank-one tensor product representations, {\em i.e.,} 
$c_{ij}=\psi^*(M_i\otimes N_j)\psi,$  where $M_i^2\preceq I, N_j^2\preceq I$, for all $i\in [n], j\in [m]$
 and $\psi$ has the form given  in  \eqref{eq:specialform}. 
Set $K:={\rm vec}({\psi})=\sum_{i=1}^d\lambda_i e_ie_i^*$ and note that $K$   is  positive definite (and even  diagonal)  and  satisfies $\tr(K^2)=1$. By \eqref{vecprop}  we have that %\note{move \eqref{vecprop}
%here as it is not being used elsewhere}  
\be\label{xcsdvdegrtg}
c_{ij}={\rm vec}(K)^*({M}_i\otimes{N}_j){\rm vec}(K)=\tr(K{M}_iK{N}_j^\sfT)=\la K{X}_i,Y_jK\ra,
\ee
where $X_i:={M}_i$ and $Y_j:={N}^\sfT_j$.  Note that since  $N_j$ is Hermitian the same holds for $Y_j$.

Clearly $X_i^2\preceq I$,  and since  a matrix and its transpose have the same eigenvalues we also have that $Y_j^2\preceq I$. We now show that in fact $X_i^2=Y_j^2=I$, for all $i\in [n],j\in [m]$. Towards a contradiction, assume there exists $i^*\in [n]$ such that $X_{i^*}^2\prec I$. Then, 
$$\la KX_{i^*},KX_{i^*}\ra=\tr(X_{i^*}^2K^2)<\tr(K^2)=1.$$
Thus, by vectorizing the matrices $\{K{X}_i\}_{i=1}^n$ and $\{Y_jK\}_{j=1}^m$, in view of \eqref{xcsdvdegrtg} we get a $C$-system  where one of the vectors  has norm strictly less than one. Nevertheless, as $C\in \ext(\cornm)$, this possibility has been already  excluded in Lemma~\ref{Tsirelson:equality} $(i)$. 

Since  $X_i^2=Y_j^2=I$, for all $i\in [n],j\in [m]$, the matrix $\gram(\{KX_i\}_i,\{Y_jK\}_j)$ is an elliptope  completion of $C$. On the other hand,  since $C\in \ext(\cornm)$,  we have seen in   Lemma~\ref{Tsirelson:equality} $(iii)$ that $C$ has a unique  completion $E_C= \left(\begin{smallmatrix}A& C\\C^\sfT& B\end{smallmatrix}\right)$ where    $\rank(E_C)=\rank(A)=\rank(B)=\rank(C)$ and $E_C\in \ext (\calE_{n+m})$. Consequently, we have that 
$$E_C=\gram(\{KX_i\}_i,\{Y_jK\}_j),$$ and the claim follows by applying  Theorem \ref{thm:bipartite}. %\note{Should there be an $X_{C}$ in theorem 1.1? Add comment that above argument is to show extremality for elliptope, forward reference to example maybe. } 
\end{proof}

We note that Theorem~\ref{thm:lowerboundlocaldimension} essentially follows from Tsirelson's  seminal  work \cite{TS87}, although it is not explicitly stated there (it is mentioned in \cite{TS93} albeit without proof). Indeed,  in \cite{TS87} Tsirelson studies the properties of another family  of matrix representations of quantum correlations called {\em commuting representations},  in the case where  the ambient Hilbert space is finite-dimensional or countably infinite.  His main result is that  for any $C \in \ext(\cornm)$, the   matrices in  a (nondegenerate) commuting representation     correspond to  a  representation  of  an appropriate Clifford algebra \cite[Theorem 3.1]{TS87}. As  a consequence,   the dimension of any commuting representation of $C \in \ext(\cornm)$ is lower bounded by $4^{\lfloor \rank(C)/2\rfloor}$ (for a concise  proof of this fact see \cite[Theorem 4.4]{cpsd2} or \cite[Theorem 25]{CPSD}). On the other hand, it is well-known and easy to see  that any tensor product representation  with local dimension $d$ gives rise to a commuting representation of size $d^2$. Putting everything together we arrive at Theorem~\ref{thm:lowerboundlocaldimension}.

 Interestingly,   Theorem \ref{thm:bipartite}  generalizes  Theorem~\ref{thm:lowerboundlocaldimension} since  there exist  
matrices $\left(\begin{smallmatrix}A& C\\C^\sfT& B\end{smallmatrix}\right)\in\ext(\calE_{n+m})$ for which  $C\not \in \ext (\cornm)$. 
To give   a concrete  example define
$$u_{ii}=e_i \quad (1\le i\le r), \quad \text{ and } \quad u_{ij}=\frac{e_i+e_j}{\sqrt{2}}\quad (1\le i<j\le r),$$
and let $E$ be the Gram matrix of the $u_{ij} \ (1\le i\le j\le r)$ ordered lexicographically.  
%\be
%E:=\gram\left(e_1,{e_1+e_2\over \sqrt{2}},\ldots, {e_1+e_r\over \sqrt{2}},\ldots, e_{r-1},{e_{r-1}+e_r\over \sqrt{2}},e_r\right),
%\ee
%where $\{e_i\}_{i=1}^r $ is the standard basis of $\R^{r}$. 
 Using \eqref{eq:extremality} one can easily verify   that  $E$ is an extreme point of the ${\binom{r+1}{2}}$-dimensional elliptope. On the other  hand,  let $C$ be the submatrix  of $E$ obtained by restricting to the first $r$ rows and the columns indexed by pairs $(i,j)$ in the range $2\le i\le j\le r$. Clearly,   $C$ is a $r\times \binom{r}{2}$ bipartite correlation matrix  but   it is an  not extreme point of  $\pi\left(\calE_{r+\binom{r}{2}}\right)$. This is an immediate consequence  of  Lemma~\ref{Tsirelson:equality}  $(ii)$,  since
$e_1\not \in \Span\left(u_{ij}:2\le i\le j\le r\right).$

\bibliographystyle{abbrv}
\bibliography{biblio,biblio1}

\appendix
\section{Omitted  proofs}

\subsection{Proof of Lemma \ref{lem:unitnorm}.}

\begin{proof} Let $S$ denote the set in the right hand side of \eqref{sdvfvergr}. Clearly,  we have  $\cornm\subseteq~S$. As $S$ is a compact convex set, for the converse  inclusion it suffices to show  that every extreme point of $S$ necessarily satisfies all the norm inequalities with equality. For this let $C\in \ext(S)$ and assume towards a contradiction   that $\|u_1\|<1$.  Select $\delta \in \R$ such that $\|u_1(1 \pm \delta) \|\le 1$. Consider  the $n\times m$ matrices $C^+,C^-$ where $C^{\pm}_{ij}=\la u_i,v_j\rangle,$ for all $i$ and $j\ne 1$,  and  $C^{\pm}_{1j}=\la u_1(1\pm \delta),v_j\rangle, $ for all $j\in [m]$. Clearly, $C^+,C^-\in S$ and  by definition  $C=(C^+ + C^-)/2$. Thus, since $C^+,C^-\ne C$ we contradict the fact that $C\in \ext(S)$.
\end{proof}

\subsection{Proof of Lemma \ref{Tsirelson:equality}.}

\begin{proof} $(i)$ This was shown already in   the proof of Lemma~\ref{lem:unitnorm}. 

\medskip

 $(ii)$ 
% The next step is to show that for any $C$-system of vectors we have that  ${\rm span}(\{u_x\}_x)={\rm span}(\{v_y\}_y). 
 Consider a $C$-system  $\{u_i\}_{i=1}^n,\{v_j\}_{j=1}^m$.  We only show   ${\rm span}(\{u_i\}_{i=1}^n) \subseteq {\rm span}(\{v_j\}_{j=1}^m),$ the other inclusion follows similarly.  Towards a contradiction, say there exists some ${i^*\in[n]}$ such that $u_{i^*}\not \in {\rm span}(\{v_j\}_{j=1}^m)$. Let  $P$ be the orthogonal projector onto ${\rm span}(\{v_j\}_{j=1}^m)$.  As $\|Pu_{i^*}\|<\|u_{i^*}\|\le 1,$  %replacing  $u_i$ with $Pu_i$ in 
 the vectors  $(\{u_i\}_{i=1}^n\setminus \{u_{i^*}\})\cup \{P u_{i^*}\}$ and $  \{v_j\}_{j=1}^m$  form  a  new $C$-system.  Lastly, since  $C\in \ext(\cornm)$ and $\|P u_{i^*}\|< 1$, this contradicts case $(i)$. %Analogously we can show that    ${\rm span}(\{v_j\}_{j=1}^m)\subseteq {\rm span}(\{u_i\}_{i=1}^n).$ 
 
 \medskip 
 
 $(iii)$  Consider a $C$-system  $\{u_i\}_{i=1}^n,\{v_j\}_{j=1}^m$.
%Consider   real vectors $\{u_x\}_x,\{v_y\}_y$  with
% $c_{xy}=\langle u_x, v_y\rangle$ and $ \|u_x\|, \|v_y\|\le 1, \ \forall x,y.$  
 As $C\in \ext(\cornm)$, %by  Lemma \ref{lem:unitnorm}  
 by case $(i)$ 
 we have that   $\|u_i\|=\|v_j\|=1,$ for all $ i\in [n],  j\in [m].$ This shows that $C\in \cornm$ has at least one completion in $\calE_{n+m}$. 
 The next step is to show that   $C$  it has a unique completion in $\calE_{n+m}$. For this, let ${\rm Gram}(\{u'_i\}_i,\{v'_j\}_j)$ and ${\rm Gram}(\{u''_i\}_i,\{v''_j\}_j)$ be two elliptope completions. For $i\in [n], j\in [m]$ define $u_i={u'_i\oplus u''_i\over \sqrt{2}}$ and $v_j={v'_j\oplus v''_j\over \sqrt{2}}$ and note that  they form a $C$-system.  Thus, by case $(ii)$  we have that ${\rm span}(\{u_i\}_{i=1}^n)={\rm span}(\{v_j\}_{j=1}^m). $ In particular, for all $i\in[n]$ there exist scalars $\{\lambda^i_j\}_{j=1}^m$ satisfying $u_i=\sum_{j=1}^m\lambda^i_jv_j$. By the definition of $u_i$ and $v_j$ this implies that 
$u'_i=\sum_{j=1}^m\lambda^i_jv'_j $  and $ u''_i=\sum_{j=1}^m\lambda^i_jv''_j.$ Then, for all $i,i'\in [n]$ we get  
 $$\la u'_i,u'_{i'}\ra=\sum_{j=1}^m\lambda^i_j\la v'_j ,u'_{i'}\ra=\sum_{j=1}^m\lambda^i_j\la v''_j ,u''_{i'}\ra=\la u''_i,u''_{i'}\ra.$$
 Analogously it follows that for all $j,j'\in [m]$ we have $  \la v'_j,v'_{j'}\ra=\la v''_j,v''_{j'}\ra$. Putting everything together we get that ${\rm Gram}(\{u'_i\}_i,\{v'_j\}_j)$ and ${\rm Gram}(\{u''_i\}_i,\{v''_j\}_j)$. 
   
  For any  $C\in \ext(\cornm)$ we denote by $E_C$ its unique elliptope completion.  
  As the set of all completions of $C$ is a face of $\calE_{n+m}$ it follows that $E_C\in \ext(\calE_{n+m})$ (here we use the fact  that the only way for a single point to be a face  is for the point itself to be   extreme). 
 
 Lastly, we have already seen that $\rank(E_C)=\rank(A)=\rank(B)$.  Clearly $\rank(E_C)\ge \rank(C)$ and it remains to   show that $r:=\rank(A)\le \rank(C).$ Wlog  assume that the first $r$ rows of $A$ are linearly independent. Then,  $\sum_{i=1}^r\lambda_i \la u_i,v_j\ra=0,$ for all  $j\in [m]$ implies that  $\sum_{i=1}^r\lambda_i  u_i=0$ and thus $\lambda_i=0$, for all $i\in [r].$
 %\note{ Would it be simpler to say that $rk(X) = Span ( u_x, v_y)$ and we have seen from above that this is same as $rk(X_{1}) = Span(u_x)$ and $rk(X_{2}= Span(v_{y})$? }  
\end{proof}

\paragraph{{\bf Acknowledgments.}}
Both authors are supported in part by the
Singapore National Research Foundation under NRF RF Award
No.~NRF-NRFF2013-13.

%\begin{thebibliography}{1}

%\bibitem{AX}
%S. Axler. Linear Algebra done right. 
%
%\bibitem{DLTW}A. C. Doherty, Y.-C. Liang, B. Toner and S. Wehner.  The quantum moment problem and bounds on entangled multi-prover games. 
%
%\bibitem{DL}
%M.~Deza and M.~Laurent.
%\newblock {\em Geometry of cuts and metrices}.
%\newblock Springer, 1997.
%
%\bibitem{GW}
%R.~Goodman and N.~R. Wallach.
%\newblock {\em Symmetry, Representations, and Invariants}.
%\newblock Springer, 2009.
%
%\bibitem{TS87}
%B.~S. Tsirelson.
%\newblock Quantum analogues of the {B}ell inequalities: The case of two
%  spatially separated domains.
%\newblock {\em Journal of Soviet Mathematics}, 36:557--570, 1987.
%
%\bibitem{CPSD}A. Prakash, J. Sikora, A. Varvitsiotis, Z. Wei. 
%\newblock Completely Positive semidefinite rank, 	arXiv:1604.07199.
%
%\bibitem{cpsd2}S. Gribling, D. de Laat, M. Laurent. Matrices with high completely positive semidefinite rank, arXiv:1605.00988 

%\end{thebibliography}

\end{document}